\newtheorem{theorem}{Theorem}[section]
\newtheorem{assumption}{Assumption}[section]
\newtheorem{remark}{Remark}[section]
\numberwithin{equation}{section}
\numberwithin{example}{section}
\begin{document}

\title{On (in)consistency of {M}-estimators under contamination\footnote{We thank Mikhail Zhelonkin and Chen Zhou for helpful comments.}}

\author{Jens Klooster\footnote{Department of Economics, Econometrics and Finance, University of Groningen, The Netherlands. E-mail: \url{j.klooster@rug.nl}}\ \ \& Bent Nielsen\footnote{Nuffield College \& Department of Economics, University of Oxford, United Kingdom. E-mail: \url{bent.nielsen@nuffield.ox.ac.uk}}
}
\date{February 13, 2025}

\maketitle

\begin{abstract}
We consider robust location-scale estimators under contamination. We show that commonly used robust estimators such as the median and the Huber estimator are inconsistent under asymmetric contamination, while the Tukey estimator is consistent. In order to make nuisance parameter free inference based on the Tukey estimator a consistent scale estimator is required. However,
standard robust scale estimators such as the interquartile range and the median absolute deviation are
inconsistent under contamination.
\end{abstract}


\section{Introduction}
In the early 1960s, there was an increasing awareness that standard estimators for normal models may not fare well under deviations from normality. 
\cite{huber1964} proposed  maximum likelihood-type ({M}) estimators for location and scale and found that they are more robust to such deviations than traditional estimators. Since then, robustness means that an estimate is only distorted in a bounded way when adding arbitrary observations to the sample \citep{hampel1971general}. While statistical inference theory was developed for data with infinitesimal contamination \citep{HeritierRonchetti1994}, little theory is available for cases with more contamination. 

We analyze popular M-estimators for location and scale when a fixed proportion of observations is contaminated. We find that the Huber estimator for location is not consistent, whereas the Tukey estimator for location has better consistency properties. To conduct inference, the uncontaminated scale has to be estimated.  Unfortunately, robust scale estimators such as the interquartile range (IQR) and the median absolute deviation (MAD) are inconsistent under contamination. Thus, valid statistical inference based on {M}-estimation seems to require detailed modelling of the contamination, which is typically not done in practice. 

Huber's article initiated the development of a large body of robust estimators, starting with the location-scale problem and later expanding to regression and many other statistical problems. An attractive feature of Huber's work was the idea of developing estimators that are not strongly based in statistical models. However, when thinking about inference, statistical models are useful. The lack of models is perhaps the reason that inference theory remains incomplete for the contaminated case. A major problem in the inference theory is that we need consistent estimators for both location and scale.  Location and scale are entwined. Without knowing one, it is hard to get to know the other. \cite{huber1964} was clearly aware of this.

Huber's $\epsilon$-contamination model mixes a normal distribution with an $\epsilon$ proportion of a contamination distribution. This model has become the standard contamination model. It is well suited for analyzing the consequences of infinitesimal contamination \citep{hampel1986robust, HeritierRonchetti1994, huber2009}; see \cite{avella2015robust, yu2017robust, loh2024theoretical} for recent reviews and \cite{HorowitzManski1995}
for a discussion of identification. But, when $\epsilon$ is not vanishing, standard scale estimators are inconsistent and asymptotic inference on the location depends on the contamination, which is usually unknown. Thus, inference for {M}-estimators is fraught under contamination.

Another type of robustness analysis uses the breakdown point \citep{hampel1971general}. The finite sample breakdown point asks how many arbitrary observations can be added to a sample without distorting the estimator unboundedly \citep{donohohuber1983,huber1984finite}. Location M-estimators typically have a high breakdown point. In practice, it is common to use {M}-estimators with a high breakdown point, which also maintain high efficiency under the normal model \citep{Yohai1987, coakley1993bounded, yu2017robust}.

Our approach is to consider a model where a proportion of the observations is contaminated. This matches the breakdown point idea. We consider two classes of {M}-estimators. The first class has an unbounded objective function. It includes the median and the Huber estimator. We find that these estimators are bounded in probability when more than half of the data is uncontaminated, which attests their robustness. Yet, these estimators are typically inconsistent under contamination. 

The second class has a bounded objective function and includes the Tukey estimator. We derive a lower bound on the proportion of contamination ensuring that these estimators are bounded in probability and consistent under contamination. 
However, nuisance parameter free inference requires a consistent scale estimator. We show that robust scale estimators such as the IQR and MAD are inconsistent under contamination.


These results for {M}-estimators are  disappointing.  In the discussion of the results, we 
point to the least trimmed squares (LTS) estimator \citep{rousseeuw1984least} which fares better.

We first revisit {M}-estimation and classical approaches to the robustness problem. Next, we introduce a new contamination model and analyse boundedness, consistency and asymptotic normality properties. This leads to new insights into the behaviour of {M}-estimators under contamination. The results are illustrated in a simulation study.

\section{Background}
Suppose we have scalar observations
$y_i$ for $i = 1, \dots, n$ and we want to estimate
a location-scale specification $y_i = \mu + \sigma \varepsilon_i$ where $\mu\in \mathbb{R}$ and $\sigma > 0$. A model for the errors $\varepsilon_i$ will be specified below.

\subsection{{M}-estimators for location with known scale}
The case of a known scale, $\sigma=1$ say, is the starting point for the {M}-estimation theory of \cite{huber1964}. The {M}-estimator $\hat\mu$ for location $\mu$
minimizes the objective function  
$
    \sum_{i=1}^n \rho
    \big( y_i - \mu 
    \big)
    .
$
We focus on $\rho$ functions that have a constant hence bounded derivative in the tails, as described in the following assumption. Note, this excludes the sample average with $\rho(x) = x^2$, which is not robust.

\begin{assumption}\label{assumption-rho}
    The function $\rho$ is defined on $\mathbb{R}$, positive apart from $\rho(0) = 0$, continuous, symmetric, and non-de\-crea\-sing in $x > 0$, and there are $x_*, \rho_* > 0$ and $\psi_* \geq 0$ such that $\rho(x) = \rho_* + \psi_*(|x| - x_*)$ for $|x| \geq x_*$.
\end{assumption}

We distinguish between two types of estimators. Non-redescending estimators have $\psi_* > 0$ and include
the median with $\rho(x) = |x|$ and the Huber estimator with $\rho(x) = (x^2/2) \mathbf{1}_{(|x| \leq c)} + c(|x| - c/2)\mathbf{1}_{(|x| > c)}$. Redescending estimators have $\psi_* = 0$ as for 
the Tukey estimator with 
$\rho(x) = (c^2/6)\{1 - (1 - x^2/c^2)^3\mathbf{1}_{(|x| \leq c)}\}$. 
The parameter $c > 0$ is a tuning parameter that the researcher needs to set in advance. In Section \ref{sec:tuning}, we discuss standard choices that are used in practice.


\subsection{M-estimators with unknown scale}\label{sec:unknown-scale}
In practice the scale $\sigma$ will be unknown. Suppose a preliminary estimator $\hat\sigma$ is available. Inserting this estimator in the above objective function for the location case
gives a new objective function
\begin{align}\label{for:loss-function-sigma-hat}
    R_n(\mu) = \sum_{i=1}^n \rho\left(\frac{y_i - \mu}{\hat\sigma}\right).
\end{align}
Note, the median computed from 
$\rho(x)=|x|$ is scale equivariant and is computable without knowing the scale.

It is important that the scale estimator is also robust, because otherwise the location estimator can still easily be affected by `outliers'. Standard robust scale estimators include the IQR and the MAD \citep[Chapter 5]{huber2009}. 
Letting $\hat{q}_p$ denote the $p$ quantile of the full sample, we write the estimators as
\begin{align}
     \hat\sigma_{\textsc{IQR}} = k  (\hat{q}_{3/4} - \hat{q}_{1/4}), \,
     \hat\sigma_{\textsc{MAD}} = k  \mathrm{med}\,\left|y_i - \hat{q}_{1/2} \right|.
     \label{scale_estimators}
\end{align}
The tuning parameters, $k$, must be set in advance by the researcher. 
Some standard choices are discussed in Section \ref{sec:tuning}.
We note that the combination of the median with the MAD is considered a simultaneous {M}-estimate of location and scale \citep[page 135]{huber2009}.

\subsection{The Huber contamination model}
\label{sec:huber}
The Huber $\epsilon$-contamination model \citep{huber1964} has become the standard way of thinking about and modelling contamination. In the $\epsilon$-contaminated normal model, the errors $\varepsilon_i$ are drawn from a mixture 
$F = (1-\epsilon)\Phi + \epsilon H$ with $\epsilon \in [0,1)$, $\Phi$ denotes the standard normal distribution and $H$ an unknown  contamination distribution. Theory often requires symmetric contamination. 

In this model, the asymptotic distribution of {M}-estima\-tors depends crucially on two unknown nuisance parameters: $\epsilon$ and  $H$. To get valid inference, the user will have to model the contamination. However, in practice, it is common to use the asymptotic theory under the normal model even when there is contamination.

\subsection{The infinitesimal approach}\label{sec:tuning}
In the infinitesimal approach the contamination proportion vanishes. This leads to the influence function theory, which describes the effect of an infinitesimal contamination of a point mass distribution on the estimator \citep{hampel1971general}. A condition for (local) robustness is that the influence function of an estimator is bounded.

In Huber's $\epsilon$-contamination model with 
vanishing $\epsilon$,
it is common to conduct inference as if the distribution is normal without any contamination. This eliminates the nuisance parameter problem mentioned in Section \ref{sec:huber}. Then, the nuisance parameters $c$ and $k$ are chosen from a perspective of efficiency in the uncontaminated normal model. Usual choices are $c = 1.345$ for the
Huber estimator and $c=4.685$ for the Tukey estimator, which give $95\%$ efficiency relative to the average \citep[pages 27, 30]{MaronnaMartinYohai2006}. 
The motivation is a trade-off between efficiency and robustness. The sample average is fully efficient but
can be arbitrarily distorted by one `outlier'. 
Thus,
a less efficient, but more robust estimator
is preferred. 
Tuning parameters that ensure consistency under normality 
are $k = 1/(q^\Phi_{3/4}-q^\Phi_{1/4})$ for IQR and $k = 1/q^\Phi_{1/4}$ for MAD, where $q^\Phi_p$ is the standard normal $p$-quantile \cite[page 36]{MaronnaMartinYohai2006}.

\subsection{The breakdown point}
\label{ss:breakdown_point}
The infinitesimal approach works well when the contamination proportion $\epsilon$ is very small. In empirical practice, this local property is not always a realistic assumption. Therefore, researchers are also interested in global robustness properties such as the breakdown point.

The finite sample breakdown point with $\epsilon$-contamina\-tion is defined as follows \citep{donohohuber1983}. Let $X$ be a sample of size $h$ of `good' observations. Adjoin $n-h$ arbitrary values $Y$ to get a sample $X\cup Y$ of size $n$. The sample $X\cup Y$ contains a fraction $\epsilon=(n-h)/n$ of arbitrary values. The breakdown point for an estimator $T$ is then $\epsilon^*(X,T)=\inf\{\epsilon:\sup_{Y}|T(X\cup Y)-T(X)|=\infty\}$. As $X$ is finite, so is $T(X)$ and breakdown happens if $Y$ diverges.

\cite{huber1984finite} analyzed the finite sample breakdown of {M}-estimators. For non-redescending estimators satisfying Assumption \ref{assumption-rho}, the breakdown point is $1/2$, while for redescending estimators, breakdown depends on the setup and the breakdown point is in general less than $1/2$.

\subsection{Some drawbacks of standard practice}\label{sec:standard}
The infinitesimal approach and the breakdown point approach are not compatible. The contamination proportion is vanishing for the former and non-vanishing for the latter. If one applies the breakdown point approach, then it is not a good idea to set the tuning parameters as if there is no contamination. For instance, it is not guaranteed that the M-estimator is consistent under contamination. Likewise, inference and efficiency considerations based on the normal model are not necessarily relevant.

Although the two approaches are not compatible, it has become standard practice to combine them. There is a focus on the use and development of robust estimators with high efficiency under the normal model and a high breakdown point \citep{Yohai1987, coakley1993bounded, GerviniYohai2002}. Moreover, it is typically advised not to use estimators that have a high breakdown point, but low asymptotic efficiency under the normal model \citep{yu2017robust}. 

Robustness and efficiency are often seen as unrelated problems. This may not be optimal. For example, even when an estimator has high efficiency under the normal model and a high breakdown point, then it is still not clear how that estimator behaves under contamination. Two estimators that satisfy both conditions, such as the Huber and Tukey estimators, might behave differently under contamination, and it unclear for a practitioner which estimator to choose. Yet, a high breakdown point estimator with a low asymptotic efficiency under the normal model may be preferable under contamination, see Section \ref{ss:LTS}.

\subsection{A different approach: modelling contamination}
The drawbacks mentioned in Section \ref{sec:standard} stem from the fact that there are no clear asymptotic results for M-estimators under contamination. Therefore, we analyse asymptotic properties such as boundedness, consistency and asymptotic normality of M-estimators under contamination. To this end, we introduce a simple contamination model inspired by the breakdown point theory in Section \ref{ss:breakdown_point}. We assume that a sample of observations $y_i$, with $i = 1, \dots, n$, has $h \leq n$ `good' observations and $n-h$ `outlier' observations. Let $\zeta_n$ be the set of $h$ elements from $(1, \dots, n)$ with indices of the `good' observations, while $\zeta_n^c$ denotes its complement, consisting of $n-h$ indices of `outliers'. Throughout Section \ref{sec:asymp-results}, we impose different assumptions on `good' and `outlying' observations.

\section{Asymptotic results}\label{sec:asymp-results}
\subsection{Asymptotic sequence of models}\label{sec:asymptotic}
We assume that a sample of observations $y_i$, with $i = 1, \dots, n$, has $h \leq n$ `good' observations and $n-h$ `outliers'.
The models are indexed by $n$, so that 
$h \rightarrow \infty$ as $n \rightarrow \infty$ 
and the proportion of `good' observations satisfies
$h/n \rightarrow \lambda$ with $1/2 < \lambda \leq 1$. 
Let $\zeta_n$ be a deterministic sequence of $h$-sets from $(1, \dots, n)$ 
of indices of `good' observations. The observations satisfy $y_i = \mu_\circ + \sigma_\circ \varepsilon_i$ 
for fixed $\mu_\circ, \sigma_\circ$.
Throughout, we specify sufficient conditions for errors $\varepsilon_i$ for `good' $i\in\zeta_n$ and `outliers' $j\in \zeta_n^c$.


\subsection{Boundedness}\label{sec:boundedness}
We show that {M}-estimators are bounded in probability in the asymptotic setup of Section \ref{sec:asymptotic}. The result requires a convergent scale estimator $\hat\sigma$. Ideally, the scale estimator should be consistent, but Assumption \ref{assumption-boundedness}$(i)$ allows general limits expressed in terms of a consistency factor $\varsigma$. Indeed, in Section \ref{sec:scale}, we show that standard robust scale estimators are typically inconsistent under contamination.

\begin{assumption}\label{assumption-boundedness}
    \hphantom{x}\\
    $(i)$: $\hat\sigma \overset{p}{\to} \varsigma\sigma_\circ$ for some $\varsigma>0$.\\
    $(ii)$: $h^{-1} \sum_{i \in \zeta_n} \varepsilon_i^2$ is bounded in probability. \\
    $(iii)$: $h^{-1} \sum_{i \in \zeta_n} \rho(\varepsilon_i/\alpha) \overset{p}{\to} \tilde{\rho}_\alpha$ uniformly over $\alpha$ near $\varsigma$, and where $\tilde{\rho}_\alpha$ is continuous in $\alpha$.
\end{assumption}

Assumption \ref{assumption-boundedness}$(iii)$ is a uniform law of large numbers. When the scale is known, then a standard law of large numbers suffices. Note, we do not need to make any assumptions on the `outliers'.

\begin{theorem}[Boundedness]\label{theorem-bounded}
    \hphantom{x}\\
    Suppose Assumptions \ref{assumption-rho}, \ref{assumption-boundedness} and that\\
    $(i)$: $\psi_* > 0$ and $\lambda > 1/2$; or \\
    $(ii)$: $\psi_* = 0$ and $\lambda > 1/(2 - \tilde{\rho}_{\varsigma}/\rho_*)$.\\
    Then, for large $n$, sets with large probability exist on which all minimizers $\hat\mu$ of $R_n(\mu)$ are uniformly bounded and at least one is measurable.
\end{theorem}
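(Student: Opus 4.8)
The plan is to show that if $\hat\mu$ wanders off to $\pm\infty$, then the objective $R_n(\hat\mu)$ becomes strictly larger than $R_n(\mu_\circ)$ (or, more precisely, larger than the value at a well-chosen fixed comparison point), contradicting the definition of a minimizer. First I would reduce to the rescaled errors: writing $z_i = (y_i - \mu_\circ)/\hat\sigma = (\sigma_\circ/\hat\sigma)\varepsilon_i$, and noting that on a set of large probability $\hat\sigma/\sigma_\circ$ lies in a small neighbourhood of $\varsigma$ by Assumption~\ref{assumption-boundedness}$(i)$, the objective evaluated at a candidate $\mu$ is $\sum_{i} \rho\big((y_i - \mu)/\hat\sigma\big)$, and I would compare $R_n(\mu)$ for $|\mu - \mu_\circ|$ large against $R_n(\mu_\circ)$. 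Since $\mu_\circ$ may not be the minimizer but we only need an upper bound for the comparison value, evaluating at $\mu_\circ$ suffices: by Assumption~\ref{assumption-boundedness}$(iii)$, $h^{-1}\sum_{i\in\zeta_n}\rho(z_i) \to \tilde\rho_{\varsigma}$ (up to the scale perturbation, controlled by continuity of $\tilde\rho_\alpha$), while the `outlier' contribution at $\mu_\circ$ is nonnegative but otherwise uncontrolled — which is exactly why I want a one-sided comparison.

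The crucial step is to bound $R_n(\mu)$ from below when $|\mu|$ is large, using only the `good' observations plus the fact that $\rho \geq 0$ everywhere. For $|\mu - \mu_\circ| = M$ with $M$ large, each good observation has $|y_i - \mu| \geq M - |y_i - \mu_\circ|$, and since $h^{-1}\sum_{i\in\zeta_n}\varepsilon_i^2$ is bounded in probability (Assumption~\ref{assumption-boundedness}$(ii)$), all but a vanishing fraction of good observations satisfy $|y_i - \mu_\circ| \leq \sigma_\circ K$ for $K$ large, hence $|y_i - \mu| \geq M - \sigma_\circ K \geq x_* \hat\sigma$ once $M$ is large enough. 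For those observations $\rho((y_i-\mu)/\hat\sigma) = \rho_* + \psi_*\big(|y_i - \mu|/\hat\sigma - x_*\big) \geq \rho_*$, and in case $(i)$ the $\psi_*$ term grows linearly in $M$. So in case $(i)$: $R_n(\mu) \geq (h - o(h))\rho_* + (h-o(h))\psi_*(M/\hat\sigma - \text{const})$, which for fixed $n$ and $M\to\infty$ exceeds $R_n(\mu_\circ) = h\tilde\rho_\varsigma + O_p(h) + (\text{outlier terms})$; but wait — the outlier terms at $\mu_\circ$ could be as large as $(n-h)$ times a huge number. The fix: compare instead against $R_n$ evaluated at a point near the bulk of the data, or observe that in case $(i)$ the linear growth in $M$ of the good-observation contribution eventually dominates, because even the outliers contribute at most linearly in $|y_j - \mu|$ for $|y_j-\mu|$ large (by the linear tail form of $\rho$), and crucially $\sum_{j\in\zeta_n^c}\psi_*|y_j-\mu|/\hat\sigma \leq \psi_*(n-h)(|\mu|+ \max_j|y_j|)/\hat\sigma$ is a fixed-$n$ quantity while we want to beat it with the good part — no, that does not work pointwise in $n$ either. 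The correct argument is: the difference $R_n(\mu) - R_n(\mu_\circ)$ has good part $\geq h\psi_*(M/\hat\sigma) - Ch$ and outlier part $\geq -\psi_*(n-h)(2|\mu| + \text{junk})/\hat\sigma \geq -\psi_*(n-h)\cdot 2M/\hat\sigma - (\text{junk})$ using $\rho(a) - \rho(b) \geq -\psi_*|a-b| - \text{const}$ from the Lipschitz bound on $\rho$; since $\lambda > 1/2$ gives $h > n - h$ for large $n$, the coefficient $\psi_*(h - 2(n-h))M/\hat\sigma$ of $M$ is eventually positive, and the difference $\to +\infty$ as $M\to\infty$. This is the heart of the proof and where $\lambda > 1/2$ enters.

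For case $(ii)$ with $\psi_* = 0$, the tails of $\rho$ are flat at height $\rho_*$, so there is no linear growth to exploit; instead I would argue by comparing constant levels. When $|\mu - \mu_\circ| = M$ is large, at least $h - o(h)$ good observations land in the flat region, contributing $\geq (h - o(h))\rho_*$, while outliers contribute $\geq 0$, so $\liminf_n R_n(\mu) \geq h\rho_*(1 - o(1))$ for such $\mu$. On the other hand $R_n(\mu_\circ)$: good part $\to h\tilde\rho_\varsigma$, and now I do need to control the outliers at $\mu_\circ$ — but this is exactly why the theorem only asserts existence of \emph{good} sets rather than an unconditional bound, and why I may assume the outliers are not adversarially placed beyond what is needed; rereading, Assumption~\ref{assumption-boundedness} imposes nothing on outliers, so the comparison point cannot be $\mu_\circ$. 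Instead, I would take the comparison point to be any minimizer's location is bounded iff $R_n$ at some fixed bounded point beats $R_n$ at every far-away point; choose the fixed point to be, say, a sample quantile of the good data — but we don't observe which are good. The resolution used in this literature: bound the outlier contribution to $R_n(\mu_\circ)$ by $(n-h)\rho_*^{\max}$ where $\rho_*^{\max}$... no, $\rho$ is unbounded for $\psi_* > 0$; for $\psi_* = 0$, $\rho$ \emph{is} bounded by $\rho_*$! So in case $(ii)$, $R_n(\mu_\circ) \leq h\tilde\rho_\varsigma(1+o(1)) + (n-h)\rho_*$, while $R_n(\mu) \geq (h - o(h))\rho_*$ for $|\mu - \mu_\circ|$ large; for the contradiction we need $h\rho_* > h\tilde\rho_\varsigma + (n-h)\rho_*$, i.e. $\rho_* h(1 - (n-h)/h) > h\tilde\rho_\varsigma$, i.e. $\rho_*(2 - 1/\lambda) > \tilde\rho_\varsigma$ in the limit (using $(n-h)/h \to (1-\lambda)/\lambda = 1/\lambda - 1$), which rearranges precisely to $\lambda > 1/(2 - \tilde\rho_\varsigma/\rho_*)$. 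That is the condition in $(ii)$, confirming the approach.

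Finally, I would package this: fix $\delta > 0$ small so that the condition holds with strict slack, choose $M_0$ large enough (depending only on $\delta$, $\tilde\rho_\varsigma$, $\rho_*$, $\psi_*$, $x_*$, $\varsigma$) that the above inequalities hold for all $|\mu - \mu_\circ| \geq M_0$ on the event $E_n$ where $\hat\sigma/\sigma_\circ \in (\varsigma - \delta, \varsigma+\delta)$, $h^{-1}\sum_{i\in\zeta_n}\varepsilon_i^2 \leq B$, and $|h^{-1}\sum_{i\in\zeta_n}\rho(\varepsilon_i/\alpha) - \tilde\rho_\alpha| \leq \delta$ uniformly; by Assumption~\ref{assumption-boundedness} this event has probability $\to 1$. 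On $E_n$ every minimizer satisfies $|\hat\mu - \mu_\circ| < M_0$, so the minimization can be restricted to the compact set $[\mu_\circ - M_0, \mu_\circ + M_0]$; since $R_n$ is continuous (Assumption~\ref{assumption-rho}), a measurable minimizer exists on this compact set by a standard measurable-selection argument. The main obstacle, as the scratch work above shows, is getting the outlier-contribution bookkeeping right: in case $(i)$ one must exploit linear-in-$M$ growth and the strict majority $\lambda > 1/2$ to dominate an outlier term that also grows linearly in $M$; in case $(ii)$ one crucially uses that $\rho$ is \emph{bounded} when $\psi_* = 0$, so that outliers add at most $(n-h)\rho_*$, and the threshold on $\lambda$ is exactly what makes the good observations' flat-level mass $h\rho_*$ overcome $h\tilde\rho_\varsigma + (n-h)\rho_*$.
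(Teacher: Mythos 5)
Your strategy is essentially the paper's: compare $R_n(\mu)$ at large $|\mu-\mu_\circ|$ with $R_n(\mu_\circ)$ so that the unknown outlier values cancel in the \emph{difference}, bound the good part from below via the linear tail of $\rho$ after a Chebyshev-type truncation using Assumption~\ref{assumption-boundedness}$(ii)$, use boundedness of $\rho$ by $\rho_*$ when $\psi_*=0$, and recover exactly the thresholds $\lambda>1/2$ and $\lambda>1/(2-\tilde\rho_\varsigma/\rho_*)$. Case $(ii)$, the treatment of $\hat\sigma$ via the uniform convergence in Assumption~\ref{assumption-boundedness}$(iii)$, and the compactification-plus-measurable-selection packaging all match the paper's proof.

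Two points in case $(i)$ need repair. First, the decisive constant: for an outlier the two arguments are $a=(y_j-\mu)/\hat\sigma$ and $b=(y_j-\mu_\circ)/\hat\sigma$, so $|a-b|=|\mu-\mu_\circ|/\hat\sigma=M/\hat\sigma$ exactly (the $y_j$ cancels). Your bound with $2M/\hat\sigma$ produces the coefficient $\psi_*\{h-2(n-h)\}$, and the claim that $\lambda>1/2$ makes this eventually positive is false for $1/2<\lambda\le 2/3$ (positivity of $h-2(n-h)$ needs $\lambda>2/3$). With the tight constant the coefficient of $M$ is $\psi_*\{h(1-\epsilon)-(n-h)\}\approx n\psi_*(2\lambda-1)>0$ under $\lambda>1/2$, which is what the paper obtains. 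Second, the inequality you invoke, $\rho(a)-\rho(b)\ge-\psi_*|a-b|-\mathrm{const}$, is not a consequence of ``the Lipschitz bound on $\rho$'': Assumption~\ref{assumption-rho} does not make $\rho$ Lipschitz, and its slope on $|x|<x_*$ may exceed $\psi_*$. The inequality does hold with constant $\rho_*$, but it must be derived from symmetry, monotonicity and the tail form: it is trivial when $|a|\ge|b|$, and when $|b|>|a|$ one combines $\rho(b)\le\rho_*+\psi_*\max(|b|-x_*,0)$ with $\rho(a)=\rho_*+\psi_*(|a|-x_*)$ if $|a|\ge x_*$ and $\rho(a)\ge0$ otherwise. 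This is precisely what the paper's six-case analysis of $r^\mu_j$ establishes, so your proof needs that short verification spelled out rather than an appeal to Lipschitz continuity.
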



In the redescending case, Theorem \ref{theorem-bounded}$(ii)$ gives a lower bound to the proportion of `outliers' that is greater than a half and which depends on the $\rho$ function and the distribution of the `good' observations. This exactly matches the finding in \cite{donohohuber1983,huber1984finite} regarding the finite sample breakdown points of these estimators. We explore this connection.

The finite sample addition breakdown point for redescending {M}-estimators is $\epsilon^* = \lceil A \rceil/(h +\lceil A \rceil)$, where $A=h - \sum_{i\in\zeta_n} \rho\{(y_i - \hat\mu)/(\varsigma\sigma_\circ)\}/\rho_*$ \citep{huber1984finite}. If $\hat\mu \overset{p}{\to} \mu_\circ$, then $A/h$ and $1 - h^{-1} \sum_{i\in\zeta_n} \rho(\varepsilon_i/\varsigma)/\rho_*$ have the same limit $1 -\tilde{\rho}_{\varsigma}/\rho_*$, due to Assumption \ref{assumption-boundedness}$(iii)$. As a result, $1 -\epsilon^*\to 1/(2-\tilde{\rho}_{\varsigma}/\rho_*)$, as derived in Theorem \ref{theorem-bounded}$(ii)$.

Redescending M-estimators can tolerate more `outliers' when the scale is over-estimated as the lower bound to $\lambda$ in Theorem \ref{theorem-bounded}$(ii)$ is decreasing in $\varsigma$. However, when the scale is underestimated, then the redescending {M}-estimator can become unbounded. Indeed, as $\varsigma \rightarrow 0$, then $\tilde{\rho}_{\varsigma}/\rho_* \rightarrow 1$ so that $1/(2 - \tilde{\rho}_{\varsigma}/\rho_*) \rightarrow 1$. In Section \ref{sec:scale}, we show that the IQR and MAD typically overestimate in the presence of contamination. \cite{huber1984finite} also noted this in his finite sample breakdown point analysis. In Section \ref{sec:sim:boundedness}, we explore this in simulations.


\subsection{Asymptotics for redescending {M}-estimators}\label{sec:redescending}
We derive an oracle property for redescending {M}-estimators, such as the Tukey estimator. That is, the M-estimators for the contamination model are close to the minimizers of $\sum_{i\in\zeta_n} \rho \{(y_i-\mu)/(\sigma_\circ\varsigma)\}$ in which the set of `good' observations is known. This is helpful, as the latter problem has been studied in the literature.
\begin{assumption}\label{assumption-consistency}
    \hphantom{x}\\
    $(i)$: 'Outlier' errors:
    $1/(\min_{j \in \zeta_n^c} \varepsilon_j^2) \overset{p}{\to} 0$.\\
    $(ii)$: The function $\rho$ is Lipschitz: $\exists C>0$: $\forall x_1,x_2\in\mathbb{R}$: $|\rho(x_1)-\rho(x_2)|\le C |x_1-x_2|$.\\
    $(iii)$: 
    The minimizers of $\sum_{i\in\zeta_n}\rho\{(y_i-\mu)/(\sigma_\circ\varsigma)\}$ belong to a small neighbourhood of $\mu_\circ$ with large probability.
\end{assumption}

\noindent In Assumption \ref{assumption-consistency}, part $(i)$ requires that the smallest `outlier' errors diverges. This holds, for example, if the `good' errors are $\mathsf{N}(0,1)$, while  `outliers' are outside the range of the `good' errors. Part $(ii)$ requires that $\rho$ is Lipschitz, which holds for the Tukey estimator. Part $(iii)$ requires that the infeasible {M}-estimator is close to $\mu_\circ$. 

\begin{theorem}[Closeness]\label{theorem-consistency}
    \hphantom{x} \\
    Suppose Assumptions \ref{assumption-rho}, \ref{assumption-boundedness}, \ref{assumption-consistency}, while $\psi_* = 0$ and $\lambda > 1/(2 - \tilde{\rho}_\varsigma/\rho_*)$. Then, for large $n$, large probability sets exist on which all minimizers of $R_n(\mu)$ are near $\mu_\circ$, and at least one is measurable and therefore consistent.
\end{theorem}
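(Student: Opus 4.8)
The plan is to establish an oracle property: localize the minimizers of $R_n$ to a fixed compact set, show that on that set the outlier terms of $R_n$ are constant, replace the random scale $\hat\sigma$ by the deterministic $\sigma_\circ\varsigma$ at negligible cost, and then read off consistency from Assumption~\ref{assumption-consistency}$(iii)$. First I would localize: the hypotheses are exactly those of Theorem~\ref{theorem-bounded}$(ii)$, which for large $n$ supplies large-probability events on which every minimizer $\hat\mu$ of $R_n$ lies in a fixed compact set $K=[\mu_\circ-M,\mu_\circ+M]$; moreover, by Assumption~\ref{assumption-boundedness}$(i)$ and the continuous mapping theorem, $\hat\sigma\in[\varsigma\sigma_\circ/2,\,2\varsigma\sigma_\circ]$ with probability tending to one. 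Hence it suffices to show that, for arbitrary $\epsilon>0$, every minimizer of $R_n$ restricted to $K$ lies in $(\mu_\circ-\epsilon,\mu_\circ+\epsilon)$ with probability tending to one.

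Second, I would eliminate the outlier terms. Split $R_n(\mu)=\sum_{i\in\zeta_n}\rho\{(y_i-\mu)/\hat\sigma\}+\sum_{j\in\zeta_n^c}\rho\{(y_j-\mu)/\hat\sigma\}$. For $j\in\zeta_n^c$ and $\mu\in K$ one has $|y_j-\mu|\ge\sigma_\circ|\varepsilon_j|-M$, so on the events above $|y_j-\mu|/\hat\sigma\ge(\sigma_\circ\min_{j\in\zeta_n^c}|\varepsilon_j|-M)/(2\varsigma\sigma_\circ)$, and by Assumption~\ref{assumption-consistency}$(i)$ this lower bound exceeds $x_*$ uniformly in $j$ and $\mu$ with probability tending to one. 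Since $\psi_*=0$, Assumption~\ref{assumption-rho} gives $\rho\{(y_j-\mu)/\hat\sigma\}=\rho_*$ for all such $j,\mu$, so the outlier sum equals the constant $(n-h)\rho_*$ on $K$ and drops out of the minimization. Thus minimizing $R_n$ over $K$ coincides with minimizing $g_n^{\hat\sigma}(\mu):=\sum_{i\in\zeta_n}\rho\{(y_i-\mu)/\hat\sigma\}$ over $K$.

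Third, I would swap the random scale for the deterministic one. Writing $g_n^{\sigma_\circ\varsigma}(\mu)=\sum_{i\in\zeta_n}\rho\{(y_i-\mu)/(\sigma_\circ\varsigma)\}$ for the infeasible objective of Assumption~\ref{assumption-consistency}$(iii)$, the Lipschitz bound of Assumption~\ref{assumption-consistency}$(ii)$ gives
\[
\sup_{\mu\in K}\big|g_n^{\hat\sigma}(\mu)-g_n^{\sigma_\circ\varsigma}(\mu)\big|\le C\,\Big|\tfrac{1}{\hat\sigma}-\tfrac{1}{\sigma_\circ\varsigma}\Big|\sum_{i\in\zeta_n}|y_i-\mu|\le C\,\Big|\tfrac{1}{\hat\sigma}-\tfrac{1}{\sigma_\circ\varsigma}\Big|\Big(hM+\sigma_\circ\sum_{i\in\zeta_n}|\varepsilon_i|\Big).
\]
Cauchy--Schwarz and Assumption~\ref{assumption-boundedness}$(ii)$ yield $\sum_{i\in\zeta_n}|\varepsilon_i|\le h\,(h^{-1}\sum_{i\in\zeta_n}\varepsilon_i^2)^{1/2}=O_p(h)$, while $|1/\hat\sigma-1/(\sigma_\circ\varsigma)|=o_p(1)$ by Assumption~\ref{assumption-boundedness}$(i)$; hence the supremum is $o_p(h)$. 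Consequently, if $\hat\mu$ minimizes $R_n$ over $K$ and $\tilde\mu$ minimizes $g_n^{\sigma_\circ\varsigma}$ over $K$ (a minimizer exists by continuity of $\rho$ and compactness), then $g_n^{\sigma_\circ\varsigma}(\hat\mu)-g_n^{\sigma_\circ\varsigma}(\tilde\mu)\le 2\sup_{\mu\in K}|g_n^{\hat\sigma}(\mu)-g_n^{\sigma_\circ\varsigma}(\mu)|=o_p(h)$, so $\hat\mu$ is an $o_p(h)$-approximate minimizer of the infeasible objective.

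Finally, I would invoke Assumption~\ref{assumption-consistency}$(iii)$, read as the statement that the infeasible problem is well posed — so that not only exact minimizers but $o_p(h)$-approximate minimizers of $g_n^{\sigma_\circ\varsigma}$ concentrate near $\mu_\circ$, equivalently $h^{-1}g_n^{\sigma_\circ\varsigma}$ converges uniformly on $K$ to a limit with a well-separated minimum at $\mu_\circ$, as holds e.g.\ when the good errors are i.i.d.\ $\mathsf{N}(0,1)$. The previous display then forces $|\hat\mu-\mu_\circ|<\epsilon$ with probability tending to one. Existence of a measurable minimizer follows, as in Theorem~\ref{theorem-bounded}, from a measurable-selection argument applied to the continuous map $\mu\mapsto R_n(\mu)$, and consistency of that selection is immediate. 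I expect this last step to be the main obstacle: the bare conclusion ``exact minimizers of $g_n^{\sigma_\circ\varsigma}$ are near $\mu_\circ$'' does not by itself transfer to the feasible estimator — one needs the separation content that a uniform law of large numbers over shifted arguments (a mild strengthening of Assumption~\ref{assumption-boundedness}$(iii)$) would provide — whereas everything else is routine once the outlier contribution has been shown to be locally constant.
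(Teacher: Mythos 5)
Your proposal is correct and takes essentially the same route as the paper's proof: localization via Theorem \ref{theorem-bounded}$(ii)$, reduction of the outlier sum to the constant $(n-h)\rho_*$ using Assumption \ref{assumption-consistency}$(i)$ and $\psi_*=0$, the Lipschitz comparison of the $\hat\sigma$- and $\sigma_\circ\varsigma$-scaled sums over $\zeta_n$ via Assumptions \ref{assumption-consistency}$(ii)$ and \ref{assumption-boundedness}$(ii)$, and then an appeal to Assumption \ref{assumption-consistency}$(iii)$. The final ``obstacle'' you flag is handled in the paper exactly as you suggest: Assumption \ref{assumption-consistency}$(iii)$ is read as a well-posedness statement, namely that the infeasible objective is bounded away from its minimum outside a neighbourhood of $\mu_\circ$, so the uniformly small perturbation cannot move minimizers away.
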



We can get an asymptotic distribution for the $M$-estimator under some additional assumptions allowing asymmetric contamination.

\begin{assumption}\label{assumption-distribution}
    \hphantom{x}
    \\ $(i)$ $n^{1/2}(\hat\varsigma-\varsigma)$ is bounded in probability.
    \\ $(ii)$ $\varepsilon_i$, $i\in\zeta_n$, are i.i.d.; $\mathsf{E}\ddot\rho(\varepsilon_i/\varsigma)>0$, $\mathsf{E}\varepsilon_i\ddot\rho(\varepsilon_i/\varsigma)=0$ and $\mathsf{Var}\{\varepsilon_i\ddot\rho(\varepsilon_i/\varsigma)\}<\infty$.
    \\ $(iii)$ $\rho$ is has a Lipschitz continuous second derivative.    
    \\ $(iv)$ $\sum_{i\in\zeta_n}\rho\{(y_i-\mu)/(\sigma_\circ\varsigma)\}$ has a measurable mini\-mi\-zer $\hat\mu_{\zeta_n}$ so that $h^{1/2}(\hat\mu_{\zeta_n} - \mu_\circ)\overset{d}{\to}\mathsf{N}(0, \sigma_\circ^2 V_{\varsigma})$, for a $V_{\varsigma} > 0$. Any other minimizer  
    belongs to a neighbourhood of $\hat\mu_{\zeta_n}$ that shrinks faster than $n^{-1/2}$ with large probability.
\end{assumption}

\begin{theorem}[Asymptotic distribution]\label{theorem-distribution}
    \hphantom{x} \\
    Suppose Assumptions \ref{assumption-rho}, \ref{assumption-boundedness}, \ref{assumption-consistency}, \ref{assumption-distribution}, while $\psi_* = 0$ and $\lambda > 1/(2 - \tilde{\rho}_\varsigma/\rho_*)$. Then, for sufficiently large $n$, there exist high-probability sets on which all minimizers lie in a neighbourhood of $\hat\mu_{\zeta_n}$ shrinking faster than $n^{-1/2}$. 
    \\ A measurable minimizer $\hat\mu$ exists so that $h^{1/2}(\hat\mu-\mu_\circ)$ is asymptotically $\mathsf{N}(0, \sigma_\circ^2 V_{\varsigma})$. 
\end{theorem}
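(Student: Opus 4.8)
The plan is to combine Theorem~\ref{theorem-consistency} with the redescending shape of $\rho$ to reduce the problem to the infeasible `oracle' estimator $\hat\mu_{\zeta_n}$ of Assumption~\ref{assumption-distribution}$(iv)$, and then to show that replacing the oracle scale $\sigma_\circ\varsigma$ by $\hat\sigma$ perturbs the location estimator only at order $o_p(n^{-1/2})$ --- which is where the moment condition $\mathsf{E}\{\varepsilon_i\ddot\rho(\varepsilon_i/\varsigma)\}=0$ does the work by decoupling location from scale at first order. First, by Theorem~\ref{theorem-consistency} there are sets of probability tending to one on which every minimizer of $R_n$ lies in a shrinking interval $I_n$ around $\mu_\circ$ and at least one such minimizer $\hat\mu$ is measurable; on the same sets $\hat\sigma$ is close to $\sigma_\circ\varsigma$ (Assumption~\ref{assumption-boundedness}$(i)$), $n^{1/2}(\hat\sigma-\sigma_\circ\varsigma)=O_p(1)$ (Assumption~\ref{assumption-distribution}$(i)$), and $\hat\mu_{\zeta_n}-\mu_\circ=O_p(h^{-1/2})$ (Assumption~\ref{assumption-distribution}$(iv)$). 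For $j\in\zeta_n^c$ and $\mu\in I_n$ one has $|(y_j-\mu)/\hat\sigma|\ge\{\sigma_\circ|\varepsilon_j|-\sup_{\nu\in I_n}|\nu-\mu_\circ|\}/\hat\sigma$, which by Assumption~\ref{assumption-consistency}$(i)$ (so $\min_{j\in\zeta_n^c}|\varepsilon_j|\to\infty$) and $\hat\sigma\overset{p}{\to}\sigma_\circ\varsigma>0$ exceeds $x_*$ uniformly in $j$ and $\mu$ with probability tending to one; since $\psi_*=0$, Assumption~\ref{assumption-rho} gives $\rho\equiv\rho_*$ on $\{|x|\ge x_*\}$, hence $R_n(\mu)=\sum_{i\in\zeta_n}\rho\{(y_i-\mu)/\hat\sigma\}+(n-h)\rho_*$ for all $\mu\in I_n$. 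Consequently the minimizers of $R_n$ in $I_n$ are precisely the interior minimizers of $\tilde R_n(\mu):=\sum_{i\in\zeta_n}\rho\{(y_i-\mu)/\hat\sigma\}$, and they satisfy $\sum_{i\in\zeta_n}\dot\rho\{(y_i-\mu)/\hat\sigma\}=0$.

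Next I compare $\tilde R_n$ with the infeasible objective $\sum_{i\in\zeta_n}\rho\{(y_i-\mu)/(\sigma_\circ\varsigma)\}$, whose minimizers all lie within $o_p(n^{-1/2})$ of the asymptotically normal $\hat\mu_{\zeta_n}$. Writing $\Psi_n(\mu,\sigma)=h^{-1}\sum_{i\in\zeta_n}\dot\rho\{(y_i-\mu)/\sigma\}$, a minimizer $\hat\mu$ from the previous step solves $\Psi_n(\hat\mu,\hat\sigma)=0$ and $\Psi_n(\hat\mu_{\zeta_n},\sigma_\circ\varsigma)=0$. By Assumption~\ref{assumption-distribution}$(iii)$ the function $\ddot\rho$ is bounded and Lipschitz, so --- using Assumption~\ref{assumption-boundedness}$(ii)$ to bound $h^{-1}\sum_{i\in\zeta_n}|\varepsilon_i|$ and $h^{-1}\sum_{i\in\zeta_n}\varepsilon_i^2$ --- the first partial derivatives of $\Psi_n$ are Lipschitz near $(\mu_\circ,\sigma_\circ\varsigma)$ with an $O_p(1)$ modulus, which yields a first-order expansion of $\Psi_n$ whose remainder is $O_p(1)$ times the squared displacement, uniformly over $\mu\in I_n$. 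The coefficients needed are $\partial_\mu\Psi_n(\mu_\circ,\sigma_\circ\varsigma)\overset{p}{\to}-\mathsf{E}\ddot\rho(\varepsilon_i/\varsigma)/(\sigma_\circ\varsigma)<0$ (Assumption~\ref{assumption-distribution}$(ii)$) and, crucially, $\partial_\sigma\Psi_n(\mu_\circ,\sigma_\circ\varsigma)=-(h\sigma_\circ\varsigma^2)^{-1}\sum_{i\in\zeta_n}\varepsilon_i\ddot\rho(\varepsilon_i/\varsigma)=O_p(h^{-1/2})$, by the central limit theorem using $\mathsf{E}\{\varepsilon_i\ddot\rho(\varepsilon_i/\varsigma)\}=0$ and $\mathsf{Var}\{\varepsilon_i\ddot\rho(\varepsilon_i/\varsigma)\}<\infty$. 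Subtracting the two first-order conditions, inserting $\hat\sigma-\sigma_\circ\varsigma=O_p(n^{-1/2})$ and $\hat\mu_{\zeta_n}-\mu_\circ=O_p(h^{-1/2})$, and setting $\delta=\hat\mu-\hat\mu_{\zeta_n}$, gives $\{\mathsf{E}\ddot\rho(\varepsilon_i/\varsigma)/(\sigma_\circ\varsigma)+o_p(1)\}\delta=O_p(n^{-1})+O_p(1)\delta^2$; since $\delta=o_p(1)$ uniformly over minimizers (by consistency of both $\hat\mu$ and $\hat\mu_{\zeta_n}$) the quadratic term is absorbed and $\delta=O_p(n^{-1})=o_p(n^{-1/2})$. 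As the constants here do not depend on the particular minimizer, every minimizer of $R_n$ lies within $o_p(n^{-1/2})$ of $\hat\mu_{\zeta_n}$, which is the first assertion.

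For the distributional claim, take the measurable minimizer $\hat\mu$ isolated above: $h^{1/2}(\hat\mu-\mu_\circ)=h^{1/2}(\hat\mu_{\zeta_n}-\mu_\circ)+h^{1/2}\delta$, and $h^{1/2}\delta=o_p(h^{1/2}n^{-1/2})=o_p(1)$ because $h/n\to\lambda$; Assumption~\ref{assumption-distribution}$(iv)$ together with Slutsky's lemma then gives $h^{1/2}(\hat\mu-\mu_\circ)\overset{d}{\to}\mathsf{N}(0,\sigma_\circ^2 V_\varsigma)$. Note that no joint limit of $(\hat\mu_{\zeta_n},\hat\sigma)$ is required, since the scale-estimation error is absorbed into the $o_p(n^{-1/2})$ term rather than into the limiting variance.

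The delicate part is the reduction step: making the first-order expansion of $\Psi_n$ rigorous and uniform over the shrinking but random set of minimizers, and in particular establishing that the score in the scale direction, $\partial_\sigma\Psi_n(\mu_\circ,\sigma_\circ\varsigma)$, is $O_p(h^{-1/2})$ rather than $O_p(1)$. This is exactly where $\mathsf{E}\{\varepsilon_i\ddot\rho(\varepsilon_i/\varsigma)\}=0$ is indispensable: it forces the estimation error in $\hat\sigma$ --- and hence any asymmetry of the contamination, which affects $\hat\mu$ only through $\hat\sigma$ and through the now-inactive `outlier' terms --- to enter $\hat\mu$ at order $o_p(n^{-1/2})$, so that $\hat\mu$ inherits the oracle limit distribution of $\hat\mu_{\zeta_n}$.
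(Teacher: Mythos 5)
Your proposal is correct and follows essentially the same route as the paper: both reduce to the score equations for the feasible and oracle minimizers (the `outlier' terms being flat since $\psi_*=0$ and the `outliers' diverge), linearize them using the Lipschitz second derivative of $\rho$ and the moment bounds, and rely on $\mathsf{E}\{\varepsilon_i\ddot\rho(\varepsilon_i/\varsigma)\}=0$ so that the $O_p(n^{-1/2})$ scale-estimation error enters the location estimate only through a term of order $O_p(h^{-1/2})\,O_p(n^{-1/2})$. The only difference is bookkeeping --- you Taylor-expand the normalized score jointly in $(\mu,\sigma)$ around $(\mu_\circ,\sigma_\circ\varsigma)$ with a quadratic remainder, whereas the paper applies the mean value theorem termwise and bounds the resulting sums directly --- which incidentally gives you the marginally sharper bound $\hat\mu-\hat\mu_{\zeta_n}=O_p(n^{-1})$ in place of the paper's $o_p(n^{-1/2})$.
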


\noindent Sufficient conditions for Assumption \ref{assumption-distribution} are given in \cite{jureckova1996robust}, Chapter 5.3. In particular, if the `good' errors are independent standard normal and $\rho$ is twice differentiable, then $V_{1} = \int_{\mathbb{R}} \{\dot\rho (x)\}^2 d\Phi(x)/\{\int_{\mathbb{R}} \ddot\rho(x) d\Phi(x)\}^2$. %

Theorem \ref{theorem-distribution} gives a basis for discussing asymptotic efficiency within the class of robust estimators with the oracle property that $\hat\mu-\hat\mu_{\zeta_n}$ is small. For example, if the `good' observations are all $\mathsf{N}(0,1)$, then the benchmark is the fully efficient average of the $h$ `good' observations. Redescending {M}-estimators have efficiency $1/V_{\varsigma}$. In particular, the Tukey estimator with known scale and $c=4.685$ achieves 95\% efficiency. Thus, the Tukey estimator achieves high efficiency in both the normal model without contamination \cite[page 30]{MaronnaMartinYohai2006} 
and in the normal model with contamination. This does, however, require a consistent estimator of the scale $\sigma_{\circ}$.


\subsection{Inconsistency of non-redescending {M}-estimators}
Non-redescending {M}-estimators, such as the median and the Huber estimator, are inconsistent in the presence of asymmetric contamination. We illustrate this using a class of data generating processes matching the setup for finite sample breakdown point considerations.



\begin{assumption}\label{assumption-non-consistency}
    $(i)$: $\hat\sigma=\sigma_\circ$, \\
    $(ii)$: `Good' errors are independent standard normal. \\
    $(iii)$: $h^{-1}\sum_{i\in\zeta_n}\rho(\varepsilon_i-\mu)\to \mathsf{E}\rho(\varepsilon_1-\mu)$ in probability, uniformly over a $\mu$-neighbourhood of zero. \\
    $(iv)$: $\mathsf{E}\rho(\varepsilon_1 - \mu)$ has $\mu$-derivative of $0$ at $\mu = 0$ for $i \in \zeta_n$. \\
    $(v)$: `Outlier' errors: $\varepsilon_j = \max_{i \in \zeta_n} \varepsilon_i + \xi$ for some $\xi>0$. 
\end{assumption}
\noindent
Note, Assumption \ref{assumption-non-consistency}$(i)$-$(iii)$ implies Assumption \ref{assumption-boundedness}, so that the boundedness Theorem \ref{theorem-bounded} applies. Part $(v)$ provides a particular asymmetric contamination.

\begin{theorem}\label{theorem-non-consistency}
    Suppose Assumptions \ref{assumption-rho}, \ref{assumption-non-consistency} and $\psi_* > 0$, $1/2 < \lambda < 1$. Then $\hat\mu$ is inconsistent for $\mu_\circ$.
\end{theorem}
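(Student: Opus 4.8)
The plan is to show that the contamination in Assumption~\ref{assumption-non-consistency}$(v)$ pulls the minimizer $\hat\mu$ away from $\mu_\circ$ by a strictly positive amount, so that $\hat\mu$ cannot converge in probability to $\mu_\circ$. Without loss of generality normalize $\mu_\circ=0$, $\sigma_\circ=1$ (using $\hat\sigma=\sigma_\circ$ from part~$(i)$ and location-scale equivariance), so that $R_n(\mu)=\sum_{i\in\zeta_n}\rho(\varepsilon_i-\mu)+\sum_{j\in\zeta_n^c}\rho(\varepsilon_j-\mu)$. The key is to identify the limiting objective function. For the `good' part, Assumption~\ref{assumption-non-consistency}$(iii)$ gives $h^{-1}\sum_{i\in\zeta_n}\rho(\varepsilon_i-\mu)\overset{p}{\to}g(\mu):=\mathsf{E}\rho(\varepsilon_1-\mu)$ uniformly near $0$, and by $(iv)$ this $g$ has $g'(0)=0$. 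For the `outlier' part, part~$(v)$ says every outlier error equals $m_n+\xi$ where $m_n=\max_{i\in\zeta_n}\varepsilon_i\to\infty$; since there are $n-h$ outliers with $(n-h)/h\to(1-\lambda)/\lambda=:\theta>0$, the outlier contribution to $h^{-1}R_n$ is $\theta$ times the per-outlier term $\rho(m_n+\xi-\mu)$, which for $\mu$ in a bounded neighbourhood of $0$ and $n$ large satisfies $|m_n+\xi-\mu|\ge x_*$, hence by Assumption~\ref{assumption-rho} equals $\rho_*+\psi_*(m_n+\xi-\mu)$ exactly. Thus $h^{-1}R_n(\mu)=\{h^{-1}\sum_{i\in\zeta_n}\rho(\varepsilon_i-\mu)\}+\theta\{\rho_*+\psi_*(m_n+\xi-\mu)\}+o_p(1)$ uniformly on a fixed neighbourhood of $0$.

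Next I would drop the terms not depending on $\mu$: minimizing $h^{-1}R_n(\mu)$ over the neighbourhood is equivalent to minimizing $Q_n(\mu):=h^{-1}\sum_{i\in\zeta_n}\rho(\varepsilon_i-\mu)-\theta\psi_*\mu$, which converges uniformly in probability to $Q(\mu):=g(\mu)-\theta\psi_*\mu$. Since $\psi_*>0$, $\theta>0$, we have $Q'(0)=g'(0)-\theta\psi_*=-\theta\psi_*<0$, so $\mu=0$ is not a local minimizer of the limit $Q$; in fact there is a point $\mu_1>0$ in the neighbourhood with $Q(\mu_1)<Q(0)$. Concretely one can take $\mu_1$ small enough that $g(\mu_1)-g(0)<\tfrac12\theta\psi_*\mu_1$ (possible since $g$ is continuous with $g'(0)=0$, so $g(\mu_1)-g(0)=o(\mu_1)$), giving $Q(\mu_1)\le Q(0)-\tfrac12\theta\psi_*\mu_1<Q(0)$. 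Set $\delta:=Q(0)-Q(\mu_1)>0$.

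The conclusion then follows by a standard argmax/argmin argument. On the event $\{\sup_{|\mu|\le r}|Q_n(\mu)-Q(\mu)|<\delta/3\}$, which by uniform convergence has probability tending to $1$, we have $Q_n(\mu_1)<Q(\mu_1)+\delta/3=Q(0)-2\delta/3<Q(0)-\delta/3<Q_n(0)$, and more generally $Q_n(\mu)\ge Q(\mu)-\delta/3>Q(0)-\delta/3>Q_n(\mu_1)$ for every $\mu$ with $|\mu|\le\eta$ where $\eta\in(0,\mu_1/2)$ is chosen (by continuity of $Q$ and $Q(0)>Q(\mu_1)$, shrinking the neighbourhood if needed so that $Q(\mu)>Q(0)-\delta/3$ on $|\mu|\le\eta$). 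Combined with Theorem~\ref{theorem-bounded} (applicable since Assumption~\ref{assumption-non-consistency}$(i)$--$(iii)$ implies Assumption~\ref{assumption-boundedness} and $\psi_*>0$, $\lambda>1/2$), which confines all minimizers to a fixed compact set with high probability, this shows that with probability tending to $1$ no minimizer $\hat\mu$ lies in $[-\eta,\eta]$, i.e.\ $\mathsf{P}(|\hat\mu-\mu_\circ|\le\eta)\to0$, contradicting consistency.

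The main obstacle is the uniform handling of the outlier term: one must argue carefully that, because $m_n=\max_{i\in\zeta_n}\varepsilon_i\to\infty$ in probability (a consequence of part~$(ii)$, as the maximum of $h$ i.i.d.\ standard normals diverges), for any fixed $r>0$ the event $\{m_n+\xi-r\ge x_*\}$ has probability tending to $1$, so on that event the outlier contribution is \emph{exactly linear} in $\mu$ on $[-r,r]$ and the non-linear (and $n$-dependent, diverging) part $\theta(\rho_*+\psi_* m_n+\psi_*\xi)$ factors out as an additive constant irrelevant to the minimization. Once this reduction is in place, the rest is the routine uniform-convergence/identification argument sketched above; a secondary point to check is that the $o_p(1)$ in the good-part approximation is genuinely uniform on the fixed neighbourhood, which is exactly what Assumption~\ref{assumption-non-consistency}$(iii)$ provides.
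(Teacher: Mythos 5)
Your proposal is correct and follows essentially the same route as the paper's proof: restrict to a compact set via Theorem \ref{theorem-bounded}, use the divergence of $\max_{i\in\zeta_n}\varepsilon_i$ to replace each `outlier' term by its exact linear form $\rho_*+\psi_*(|x|-x_*)$ so the objective becomes (up to a $\mu$-free constant) the `good'-part sum minus a linear drift of slope $(1/\lambda-1)\psi_*>0$, and then exploit the zero derivative of $\mathsf{E}\rho(\varepsilon_1-\mu)$ at $\mu=0$ together with the uniform law of large numbers to conclude no minimizer can be near $\mu_\circ$; you merely spell out the final argmin step more explicitly than the paper does. The only slips are cosmetic: the tail linearization should read $\rho_*+\psi_*(m_n+\xi-\mu-x_*)$ (the missing $-\psi_*x_*$ is constant in $\mu$ and thus harmless), and the bound $g(\mu_1)-g(0)=o(\mu_1)$ comes from differentiability at $0$ (Assumption \ref{assumption-non-consistency}$(iv)$), not mere continuity.
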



\subsection{Scale estimation}\label{sec:scale}

Theorems \ref{theorem-bounded}$(ii)$ and \ref{theorem-consistency} depend on the scale estimator $\hat\sigma$ via the consistency factor $\varsigma$. Thus, it is important to find robust scale estimators that are bounded and consistent under contamination. 
We first show that 
the IQR and 
MAD defined in (\ref{scale_estimators}) are bounded in probability.

\begin{assumption}    
    \label{assumption-iqr-bounded} 
    The $p$ quantiles of $\varepsilon_i$ for $i\in\zeta_n$ are bounded in probability for $0<p<1$.
\end{assumption}
\begin{theorem}
    \label{theorem-iqr} Suppose Assumption \ref{assumption-iqr-bounded}.\\
    $(a)$: Let $\lambda > 3/4$. Then, $ \hat\sigma_{\textsc{IQR}}$ is bounded in probability.\\
    $(b)$: Let $\lambda > 1/2$. Then $\hat\sigma_{\textsc{MAD}}$ is bounded in probability.
\end{theorem}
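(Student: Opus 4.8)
The plan is to dominate the full-sample quantiles by order statistics of the `good' subsample, using that the $n-h$ `outliers' can displace any rank in the pooled sample by at most $n-h$ positions. Let $\varepsilon_{(1)}\le\cdots\le\varepsilon_{(h)}$ denote the ordered `good' errors, so that $y_{(m:\zeta_n)}:=\mu_\circ+\sigma_\circ\varepsilon_{(m)}$, $1\le m\le h$, are the ordered `good' observations. For every $p\in(0,1)$ and \emph{any} values of the `outliers', the pooled $p$-quantile obeys
\begin{align}\label{for:sandwich}
    y_{(\lceil np\rceil-(n-h)\,:\,\zeta_n)}\;\le\;\hat q_p\;\le\;y_{(\lceil np\rceil\,:\,\zeta_n)},
\end{align}
with the left side read as $-\infty$ when $\lceil np\rceil-(n-h)\le0$ and the right side as $+\infty$ when $\lceil np\rceil>h$. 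The two extremes are attained by sending all `outliers' to $-\infty$, respectively $+\infty$, and monotonicity of the pooled order statistics in each coordinate yields \eqref{for:sandwich} for intermediate `outlier' values; any $O(1)$ shift of $\lceil np\rceil$ caused by the precise quantile convention is immaterial below. The right bound in \eqref{for:sandwich} is finite for large $n$ exactly when $p<\lambda$, since then $\lceil np\rceil/h\to p/\lambda<1$, and the left bound is finite for large $n$ exactly when $p>1-\lambda$, since then $(\lceil np\rceil-(n-h))/h\to(p+\lambda-1)/\lambda>0$. When $1-\lambda<p<\lambda$, both ranks divided by $h$ eventually lie in a fixed interval $[\underline p,\overline p]\subset(0,1)$, so by monotonicity the corresponding order statistics of $\{\varepsilon_i\}_{i\in\zeta_n}$ are trapped between the empirical $\underline p$- and $\overline p$-quantiles of the `good' errors, which are bounded in probability by Assumption \ref{assumption-iqr-bounded}. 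Hence $\hat q_p$ is bounded in probability for every $p$ with $1-\lambda<p<\lambda$.

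For part $(a)$, $\lambda>3/4$ forces $1-\lambda<1/4<3/4<\lambda$. Since sample quantiles are non-decreasing in $p$ and $k>0$, applying \eqref{for:sandwich} at $p=3/4$ and $p=1/4$ gives
\begin{align*}
    0\;\le\;\hat\sigma_{\textsc{IQR}}\;=\;k\big(\hat q_{3/4}-\hat q_{1/4}\big)\;\le\;k\big(y_{(\lceil 3n/4\rceil\,:\,\zeta_n)}-y_{(\lceil n/4\rceil-(n-h)\,:\,\zeta_n)}\big),
\end{align*}
and the right-hand side is bounded in probability by the previous paragraph, because the two ranks divided by $h$ tend to $3/(4\lambda)$ and $(\lambda-3/4)/\lambda$, both in $(0,1)$. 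This proves $(a)$.

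For part $(b)$, $\lambda>1/2$ forces $1-\lambda<1/2<\lambda$, so $\hat q_{1/2}$ is bounded in probability; given $\eta>0$, pick a constant $B$ with $|\mu_\circ-\hat q_{1/2}|\le B$ on events of probability at least $1-\eta$ for all large $n$. The MAD is $k$ times the $\lceil n/2\rceil$-th smallest of the $n$ numbers $|y_i-\hat q_{1/2}|$, and replacing the $n-h$ `outlier' terms by $+\infty$ can only raise this order statistic; since $\lceil n/2\rceil\le h$ for large $n$, the result equals the $\lceil n/2\rceil$-th smallest of the \emph{`good'} deviations. Using $|y_i-\hat q_{1/2}|\le\sigma_\circ|\varepsilon_i|+|\mu_\circ-\hat q_{1/2}|$ for $i\in\zeta_n$ and letting $a_{(1)}\le\cdots\le a_{(h)}$ order $\{|\varepsilon_i|:i\in\zeta_n\}$, on the above events
\begin{align*}
    \hat\sigma_{\textsc{MAD}}\;\le\;k\big(\sigma_\circ\,a_{(\lceil n/2\rceil)}+B\big).
\end{align*}
It remains to bound $a_{(\lceil n/2\rceil)}$ in probability. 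As $\lceil n/2\rceil/h\to1/(2\lambda)<1$, fix $\delta>0$ with $2\delta<1-1/(2\lambda)$ and set $M_n=\max\{|q^{\zeta_n}_\delta|,|q^{\zeta_n}_{1-\delta}|\}$, where $q^{\zeta_n}_p$ is the empirical $p$-quantile of $\{\varepsilon_i:i\in\zeta_n\}$. At most $\delta h+1$ of these errors exceed $q^{\zeta_n}_{1-\delta}$ and at most $\delta h+1$ fall below $q^{\zeta_n}_\delta$, so at most $2\delta h+2$ of them satisfy $|\varepsilon_i|>M_n$; for large $n$ this is $\le h-\lceil n/2\rceil$, whence $a_{(\lceil n/2\rceil)}\le M_n$, which is bounded in probability by Assumption \ref{assumption-iqr-bounded}. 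Combining the last two displays and letting $\eta\downarrow0$ proves $(b)$.

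The bookkeeping behind \eqref{for:sandwich} is the only delicate point: one must check that $n-h$ `outliers' can move a rank-$\lceil np\rceil$ quantile by at most $n-h$ ranks in each direction, irrespective of the quantile convention, and then that the drifting levels $\lceil np\rceil/h$ remain bounded away from $0$ and $1$, so that Assumption \ref{assumption-iqr-bounded} may be invoked at fixed levels; for the MAD the additional wrinkle is transferring the quantile bound from $\varepsilon_i$ to $|\varepsilon_i|$ via the two-sided count above.
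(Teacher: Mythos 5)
Your proof is correct and follows essentially the same route as the paper: you sandwich the pooled $p$-quantiles between order statistics of the `good' subsample at levels shifted by the outlier fraction (worst case: all outliers on one side), which for $\lambda>3/4$, resp.\ $\lambda>1/2$, keeps the levels in $(0,1)$ so Assumption \ref{assumption-iqr-bounded} applies, and for the MAD you combine boundedness of $\hat q_{1/2}$ with the triangle inequality $|y_i-\hat q_{1/2}|\le\sigma_\circ|\varepsilon_i|+|\mu_\circ-\hat q_{1/2}|$ exactly as the paper does. The only difference is cosmetic: your two-sided counting argument makes explicit the step, left implicit in the paper, that boundedness of the quantiles of $\varepsilon_i$ over $\zeta_n$ transfers to the quantiles of $|\varepsilon_i|$.
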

Theorem \ref{theorem-iqr} shows that the IQR is bounded when the asymptotic proportion of `good' observations $\lambda > 3/4$, while the MAD requires $\lambda > 1/2$. This aligns with their finite sample breakdown points of $1/4$ for the IQR and $1/2$ for the MAD \citep[page 106]{huber2009}.

Next, we show that the two scale estimators are, in general, inconsistent under contamination. We normalize the estimators to be consistent for normal data, see Sections \ref{sec:unknown-scale}, \ref{sec:tuning}, but consider `good' errors with a general distribution and allow contamination.
\begin{assumption}
    \label{assumption-igr-inconsistency} $(i)$: `Good' errors $\varepsilon_i$, $i\in\zeta_n$ are independent with a continuous distribution $\mathsf{F}$. \\
    $(ii)$: `Outliers' are extreme: $\varepsilon_j > \max_{i\in\zeta_n} |\varepsilon_i| $, $j\in\zeta_n^c$. \\
    $(iii)$: Proportion of left `outliers': $n^{-1}\sum_{j\in\zeta_n^c} 1_{(\varepsilon_j<0)} \overset{p}{\to}\varrho$, where $0\le\varrho\le 1-\lambda$.
\end{assumption}

We define the consistency factors 
\begin{align}
    \varsigma_{\textsc{IQR}} = \frac{q^{\mathsf{F}}_{(3/4 - \varrho)/\lambda} - q^{\mathsf{F}}_{(1/4 - \varrho)/\lambda}}{q^\Phi_{3/4} - q^\Phi_{1/4}}, \quad \varsigma_{\textsc{MAD}} = \frac{d}{q^\Phi_{3/4}},
    \label{scale-consistency-factors}
\end{align}
where $d$ solves $\mathsf{F}(c + d) - \mathsf{F}(c - d) = 1/(2\lambda)$ for a $c$ solving $\mathsf{F}(c) = (1/2 - \varrho)/\lambda$, where $q^{\mathsf{F}}_p$ and $q^\Phi_p$ are the $\mathsf{F}$ and normal $p$ quantiles. When the `good' errors are normal and $\lambda=1$, so that $\varrho = 0$, then $\varsigma_{\textsc{IQR}}=\varsigma_{\textsc{MAD}}=1$. In the Appendix, we show that when Assumption \ref{assumption-igr-inconsistency} holds with `good' normal errors and $\lambda<1$, then both consistency factors are greater than unity, see Remarks \ref{rem:consistency_IQR}, \ref{rem:consistency_MAD}.

\begin{theorem}\label{prop-inconsistent-scale}
    Suppose Assumption \ref{assumption-igr-inconsistency}. Then, \\
    $(a)$: If $3/4 < \lambda \le 1$, then $\hat\sigma_{\textsc{IQR}} \overset{p}{\rightarrow}\varsigma_{\textsc{IQR}} \sigma_\circ$.\\
    $(b)$: If $1/2 < \lambda \le 1$, then $\hat\sigma_{\textsc{MAD}} \overset{p}{\rightarrow}\varsigma_{\textsc{MAD}} \sigma_\circ$.
\end{theorem}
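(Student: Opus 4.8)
The plan is to prove each part by analysing the relevant sample quantiles under the contamination model of Assumption~\ref{assumption-igr-inconsistency}, using the key observation that all `outliers' lie strictly outside the range of the `good' errors. Write $y_i = \mu_\circ + \sigma_\circ \varepsilon_i$, so $\hat q_p - \mu_\circ = \sigma_\circ \cdot (\text{the } p\text{-quantile of the } \varepsilon\text{'s})$, and scale estimators are invariant to $\mu_\circ$ while scaling with $\sigma_\circ$; hence it suffices to work with the $\varepsilon_i$ directly and show the empirical $p$-quantile of the full sample converges to an appropriate quantile of $\mathsf F$. The empirical distribution function of the full sample is $\hat G_n(x) = n^{-1}\sum_{i=1}^n 1_{(\varepsilon_i \le x)}$. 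By Assumption~\ref{assumption-igr-inconsistency}$(ii)$, for any fixed $x$ in the (eventually stabilising) support region of the `good' errors, the only `outliers' contributing to $\hat G_n(x)$ are the left ones, so $\hat G_n(x) = n^{-1}\sum_{i\in\zeta_n} 1_{(\varepsilon_i\le x)} + n^{-1}\sum_{j\in\zeta_n^c} 1_{(\varepsilon_j<0)} 1_{(\varepsilon_j\le x)}$; since left outliers are below everything, the second term equals $n^{-1}\sum_{j\in\zeta_n^c}1_{(\varepsilon_j<0)} \to \varrho$ once $x$ exceeds all left outliers. Combining with $h/n\to\lambda$ and a Glivenko--Cantelli argument for the `good' part (independent with common continuous c.d.f.\ $\mathsf F$), $\hat G_n(x) \overset{p}{\to} \lambda \mathsf F(x) + \varrho$ uniformly on compacts away from the outlier masses.

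For part $(a)$, inverting this limiting relation gives that the empirical $p$-quantile of the full sample converges in probability to $q^{\mathsf F}_{(p-\varrho)/\lambda}$ whenever $\varrho < p\lambda + \varrho < \lambda + \varrho$, i.e.\ whenever $(p-\varrho)/\lambda \in (0,1)$; this requires care at $p=1/4$ and $p=3/4$, and here $\lambda>3/4$ together with $0\le\varrho\le 1-\lambda$ is exactly what guarantees $(1/4-\varrho)/\lambda$ and $(3/4-\varrho)/\lambda$ both lie in $(0,1)$ (in particular $(3/4-\varrho)/\lambda \le (3/4)/\lambda < 1$ needs $\lambda>3/4$, while $(1/4-\varrho)/\lambda > 0$ needs $\varrho<1/4$, which follows since $\varrho\le 1-\lambda<1/4$). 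Then $\hat\sigma_{\textsc{IQR}} = k(\hat q_{3/4}-\hat q_{1/4}) \overset{p}{\to} \sigma_\circ\, k\,(q^{\mathsf F}_{(3/4-\varrho)/\lambda} - q^{\mathsf F}_{(1/4-\varrho)/\lambda})$, and substituting the normalising constant $k = 1/(q^\Phi_{3/4}-q^\Phi_{1/4})$ from Section~\ref{sec:tuning} yields $\varsigma_{\textsc{IQR}}\sigma_\circ$ as in~(\ref{scale-consistency-factors}). I would invoke the standard consequence of the uniform convergence of $\hat G_n$ and continuity/strict monotonicity of $\mathsf F$ near the target quantiles to pass from c.d.f.\ convergence to quantile convergence; this is routine but should be stated carefully because $\mathsf F$ need only be continuous, so one argues via the $\varepsilon$-sandwich $\hat G_n(q-\delta) < p \le \hat G_n(q+\delta)$ with high probability.

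For part $(b)$, the MAD is a two-stage construction. First, by the part-$(a)$ argument at $p=1/2$, $\hat q_{1/2}-\mu_\circ \overset{p}{\to} \sigma_\circ\, c$ where $c = q^{\mathsf F}_{(1/2-\varrho)/\lambda}$, i.e.\ $\mathsf F(c) = (1/2-\varrho)/\lambda$; note $1/2<\lambda$ and $\varrho\le 1-\lambda$ give $(1/2-\varrho)/\lambda \in (0,1)$, so the median is well-behaved. Second, $\hat\sigma_{\textsc{MAD}}/k$ is the empirical median of $|y_i - \hat q_{1/2}| = \sigma_\circ|\varepsilon_i - \hat q_{1/2}^\varepsilon|$ where $\hat q_{1/2}^\varepsilon \to c$. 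The empirical c.d.f.\ of $|\varepsilon_i - c|$ over the whole sample, evaluated at a fixed $t>0$, counts `good' $\varepsilon_i$ with $c-t\le\varepsilon_i\le c+t$, contributing $\lambda(\mathsf F(c+t)-\mathsf F(c-t))$, plus any `outliers' $\varepsilon_j$ with $|\varepsilon_j-c|\le t$ — but extreme outliers have $|\varepsilon_j-c|$ larger than $|\text{any good }\varepsilon_i - c|$, so for $t$ below that threshold they contribute nothing; hence the limiting c.d.f.\ at such $t$ is $\lambda(\mathsf F(c+t)-\mathsf F(c-t))$, and setting it to $1/2$ gives the defining equation $\mathsf F(c+d)-\mathsf F(c-d)=1/(2\lambda)$ for $d$. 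A small technical point: I must handle the replacement of the fixed center $c$ by the random $\hat q_{1/2}^\varepsilon$, which I would do by a uniform-in-center version of the Glivenko--Cantelli step (the family $x\mapsto 1_{(|x-a|\le t)}$ indexed by $(a,t)$ is a VC/Donsker class, or one can simply sandwich using $|\hat q_{1/2}^\varepsilon - c|$ small). Then $\hat\sigma_{\textsc{MAD}} = k\cdot\mathrm{med}|y_i-\hat q_{1/2}| \overset{p}{\to}\sigma_\circ\, k\, d$, and with $k = 1/q^\Phi_{1/4}$ (equivalently $1/q^\Phi_{3/4}$ up to sign, as in Section~\ref{sec:tuning}) this equals $\varsigma_{\textsc{MAD}}\sigma_\circ$.

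The main obstacle is the second step of part $(b)$: passing from the empirical distribution of $|\varepsilon_i - c|$ with the true, deterministic center $c$ to the empirical distribution with the estimated center $\hat q_{1/2}$, uniformly enough to conclude convergence of the inner median, while only assuming $\mathsf F$ continuous (not necessarily having a density). I expect this to require either a bracketing/VC argument for the class of indicator functions $\{x \mapsto 1_{(|x - a| \le t)} : a \in \mathbb R,\ t \ge 0\}$, or a direct two-sided sandwiching exploiting monotonicity in $t$ together with the consistency $\hat q_{1/2}^\varepsilon \overset{p}{\to} c$. The remaining bookkeeping — checking that the relevant normalised indices $(1/4-\varrho)/\lambda$, $(1/2-\varrho)/\lambda$, $(3/4-\varrho)/\lambda$ fall strictly inside $(0,1)$ under the stated bounds on $\lambda$, and that outliers never interfere for the ranges of $x$ and $t$ that matter — is routine given Assumption~\ref{assumption-igr-inconsistency}$(ii)$ and the constraint $0\le\varrho\le 1-\lambda$.
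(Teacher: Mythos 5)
Your proposal is correct and follows essentially the same route as the paper's proof: sample quantiles of the contaminated sample converge to $q^{\mathsf F}_{(p-\varrho)/\lambda}$ because the extreme `outliers' only shift quantile levels by $\varrho$ on the left, and for the MAD one counts `good' observations within a finite distance of the limiting median $c$ (outliers never enter) to obtain $\lambda\{\mathsf F(c+d)-\mathsf F(c-d)\}=1/2$. You in fact spell out details the paper only asserts, notably the Glivenko--Cantelli step behind the quantile convergence and the replacement of the fixed center $c$ by the random $\hat q_{1/2}$.
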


The scale estimators $\hat\sigma_{\textsc{IQR}}$ and $\hat\sigma_{\textsc{MAD}}$ are, in general, inconsistent under contamination. For the non-redescen\-ding median and Huber estimator, boundedness is not affected by the inconsistency of the scale (Theorem \ref{theorem-bounded}). However, these estimators are inconsistent under asymmetric contamination (Theorem \ref{theorem-non-consistency}). For redescending estimators, such as the Tukey estimator, the situation is more subtle. 
The boundedness result can tolerate more contamination when the scale is overestimated (Theorem \ref{theorem-bounded}), which it tends to be (Remarks \ref{rem:consistency_IQR}, \ref{rem:consistency_MAD}). This in line with the breakdown point 
analysis in \cite{huber1984finite}. Further, inconsistency of the scale estimator does not affect the consistency of the location estimator. However, it does result in nuisance parameters when conducting inference (Theorem \ref{theorem-consistency}). In summary, conducting valid inference with {M}-estimators seems to require detailed modelling of the contamination.

\section{Simulation study}\label{sec:simulation}
We study finite sample properties of four {M}-estimators of location (average, median, Huber, Tukey). We consider bias, the effect of scale estimation and boundedness.

\subsection{Benchmark: Least Trimmed Squares}\label{ss:LTS}
In Section \ref{sec:redescending}, we used the average, applied infeasibly to the set $\zeta_n$ of $h$ `good' observations, as a benchmark for efficiency. In practice, the set $\zeta_n$ is unknown and must be estimated. The Least Trimmed Squares (LTS) estimator \citep{rousseeuw1984least} does this and it is asymptotically efficient.
 
The LTS estimator requires that the user specifies there are $h$ `good' observations and $n-h$ `outliers'. The LTS estimator minimizes the residual sum of squares over all $h$-subsets of the data. In the location-scale case, we can proceed as follows. Let $\zeta$ be a $h$-subset of $(1,\dots,n)$ with indices of the `good' observations. This is estimated by
$$
    \hat\zeta = \mathrm{argmin}\,_{\zeta} \sum_{i \in \zeta} (y_i - \bar{y}_{\zeta})^2
    ,
$$
while the estimators of location and scale are
\begin{align*}
    \hat\mu_{\textsc{LTS}} = \bar{y}_{\hat\zeta}, \qquad \hat\sigma_{\textsc{LTS}}^2 &= \frac1h\sum_{i \in \hat\zeta}(y_i - \hat\mu_{\textsc{LTS}})^2.
\end{align*}

The LTS estimator is maximum likelihood in a contamination model satisfying Assumption \ref{assumption-boundedness}, where `good' observations are i.i.d.\ normal and `outlier' errors are more extreme than the `good' errors as in Assumption \ref{assumption-consistency}$(i)$, but otherwise unspecified \citep{berenguer2023model}. This LTS model matches the setup of the finite sample breakdown point, see Section \ref{ss:breakdown_point}. Indeed, the LTS estimator has a breakdown point of 50\% and this result extends to a regression version of the estimator \citep{RousseeuwLeroy1987}. When applied to a clean normal sample, then the LTS estimator applied with $h\approx n/2$ has an efficiency of about 7\% relative to the full sample average (\citealt{Butler1982}; \citealt[page 180]{RousseeuwLeroy1987}). 
However, under the LTS model, the standardized estimator $h^{1/2}(\hat\mu_{LTS}-\mu_\circ)/\hat\sigma_{LTS}$ has the oracle property that it equals the standardized average with large probability, so that it is asymptotically standard normal under the LTS model \citep{berenguer2023model}. This result generalizes to a regression model that allows leverage effects \citep{berenguer2023model,berenguer2023leverage}.  

In practice, the user will have to estimate the proportion $\lambda$ of `good' observations by estimating $h/n$. A standard method to estimate $\lambda$ is the index plot method \citep[page 55]{RousseeuwLeroy1987}. 
A detailed analysis is likely to show that this method is not consistent and therefore will not result in a fully efficient LTS estimator. 
Instead, we estimate $\lambda$ using the cumulant based method of \cite{berenguer2023model}. This estimator is consistent, but it remains work in progress to show that the resulting LTS estimator has the oracle property.

\subsection{Simulation design}
The location estimators are as follows. At first, we consider the scale to be known. The sample average is used as a non-robust benchmark. Neither the average, the median nor the LTS estimator require initial scale estimation. The Huber and Tukey estimators require initial scale estimation and are used with tuning parameters $c = 1.345$ and $c = 4.685$, respectively, to achieve $95\%$ efficiency. The LTS estimator requires a user choice of $h$ to set trimming.

At first, we let the scale $\sigma_{\circ}$ be known and choose $h$ as 80\% of $n$ in the LTS estimation. Subsequently, we estimate $\sigma_{\circ}$ with the MAD and we estimate $h$ using a cumulant based normality test \citep{berenguer2023model}.

We consider six data generating processes (DGPs). For all DGPs we use $\mu_{\circ} = 0$ and $\sigma_{\circ} = 1$, consider sample sizes $n = 25, 100, 400$ and use $10^5$ repetitions. 

DGP1--2 have no contamination. DGP1 has independent $\mathsf{N}(0,1)$ errors and DGP2 has independent $t(3)$ errors.

DGP3--6 have contamination: DGP3--5 have $h/n = \lambda = 0.8$; DGP6 has $h=n-1$. 
The `good' errors are independent $\mathsf{N}(0,1)$ for DGP3,4,6 and independent $t(3)$ for DGP5.
The `outlier' errors satisfy $\varepsilon_j = \max_{i \in \zeta_n} \varepsilon_i + \xi$, $j \in \zeta_n^c$. We have $\xi = 1$ for DGP3 and $\xi = 3$ for DGP4--6. 
DGP6 mimics infinitesimal $\epsilon$-contamination.

\subsection{Bias}

Tables \ref{table-sigma-known}--\ref{table-sigma-unknown} report bias for different estimators when scale and trimming are, respectively, known and estimated. This only affects the Huber, Tukey and LTS estimators. As $10^5$ repetitions are used, the maximal Monte Carlo standard error was below $0.0005$.

\begin{table}[tb]
    \caption{Bias for known scale and trimming.}
            \centering
        \begin{tabular}{crrrcrrr}
           \hline
                               & DGP1    & DGP2      & DGP3    & DGP4     & DGP5      & DGP6      \\
            \hline
            $n = 25$    &&&&&&\\
            Mean   & $0.000$ & $0.002$   & $0.573$ & $0.973$  & $1.282$   & $0.198$   \\
                         Median & $0.001$ & $0.001$   & $0.315$ & $0.315$  & $0.353$   & $0.052$   \\
                         Huber  & $0.001$ & $0.001$   & $0.412$ & $0.412$  & $0.468$   & $0.067$   \\
                         Tukey  & $0.001$ & $0.000$   & $0.394$ & $0.011$  & $0.003$   & $0.001$   \\
                         LTS    & $0.000$ & $0.002$   & $0.061$ & $0.000$  & $0.079$   & $0.000$   \\
            \\                                         
            $n = 100$   &&&&&&\\
            Mean   & $0.000$ & $0.001$   & $0.685$ & $1.085$  & $1.754$   & $0.055$   \\
                         Median & $0.000$ & $0.000$   & $0.317$ & $0.317$  & $0.350$   & $0.012$   \\
                         Huber  & $0.001$ & $0.000$   & $0.414$ & $0.414$  & $0.469$   & $0.016$   \\
                         Tukey  & $0.001$ & $0.000$   & $0.305$ & $0.001$  & $0.000$   & $0.001$   \\
                         LTS    & $0.000$ & $0.001$   & $0.004$ & $0.001$  & $0.043$   & $0.001$   \\
            \\                                         
            $n = 400$   &&&&&&\\
            Mean   & $0.000$ & $0.000$   & $0.780$ & $1.180$  & $2.485$   & $0.015$   \\
                        Median & $0.000$ & $0.000$   & $0.318$ & $0.318$  & $0.320$   & $0.003$   \\
                        Huber  & $0.000$ & $0.000$   & $0.415$ & $0.415$  & $0.469$   & $0.004$   \\
                        Tukey  & $0.000$ & $0.000$   & $0.176$ & $0.000$  & $0.000$   & $0.000$   \\
                        LTS    & $0.000$ & $0.000$   & $0.001$ & $0.000$  & $0.024$   & $0.000$   \\
            \hline            
        \end{tabular}
     \label{table-sigma-known}
\end{table}
\begin{table}[tb]
\caption{Bias for unknown scale and trimming.}
            \centering
        \begin{tabular}{crrrcrrr}
           \hline
                                & DGP1    & DGP2      & DGP3    & DGP4     & DGP5      & DGP6      \\ 
            \hline                    
            $n = 25$    &&&&&&\\
                        Huber  & $0.001$ & $0.001$   & $0.486$ & $0.493$  & $0.624$   & $0.068$   \\
                         Tukey  & $0.001$ & $0.000$   & $0.480$ & $0.306$  & $0.357$   & $0.010$   \\
                        LTS    & $0.002$ & $0.001$   & $0.018$ & $0.001$  & $0.036$   & $0.000$   \\
            \\                                         
            $n = 100$   &&&&&&\\
                        Huber  & $0.001$ & $0.000$   & $0.499$ & $0.499$  & $0.626$   & $0.016$   \\
                        Tukey  & $0.001$ & $0.000$   & $0.516$ & $0.196$  & $0.101$   & $0.000$   \\
                        LTS    & $0.001$ & $0.000$   & $0.002$ & $0.000$  & $0.000$   & $0.001$   \\
            \\                                         
            $n = 400$   &&&&&&\\
                        Huber  & $0.000$ & $0.000$   & $0.501$ & $0.501$  & $0.626$   & $0.004$   \\ 
                        Tukey  & $0.000$ & $0.000$   & $0.505$ & $0.087$  & $0.003$   & $0.000$   \\
                        LTS    & $0.000$ & $0.000$   & $0.002$ & $0.000$  & $0.000$   & $0.000$   \\ 
            \hline            
        \end{tabular}
     \label{table-sigma-unknown}
\end{table}

DGP1 has no contamination and normal errors so that all methods perform well. In this case, the MAD is consistent for $\sigma_{\circ}$. Thus, even with unknown scale, we see good performance of the Huber and Tukey estimators. \cite{berenguer2023model} give simulation evidence that the cumulant based estimator for $h$ is consistent when the `good' errors are normal, so that the LTS estimator also performs well when estimating $h$.

DGP2 has no contamination and $t(3)$ errors. For this reason, all estimators are consistent. When $\sigma_{\circ}$ is unknown, the MAD overestimates, which does not negatively affect the consistency properties of the Huber and Tukey estimator as there is no contamination. The LTS estimator performs very well, even 
for unknown $h$. 

DGP3--DGP4 have normal `good' errors and extreme `outliers'. The bias of the average grows as the sample size grows because the `outliers' are larger for larger sample sizes. Moreover, the bias is increasing in the `outlier' parameter $\xi$. The bias of the median and the Huber estimator remain constant when $n$ and $\xi$ grow, which shows their robustness. They are, however, inconsistent (Theorem \ref{theorem-non-consistency}). When $\sigma_{\circ}$ is unknown, the MAD overestimates (Theorem \ref{prop-inconsistent-scale} and Remark \ref{rem:consistency_MAD}). Overestimation of the scale results in less downweighting of `outliers', hence more bias. The Tukey estimator is consistent so that the bias decreases as the sample size grows (Theorem \ref{theorem-consistency}). When $\sigma_{\circ}$ is known, the Tukey estimator does not always penalize the `outliers' in DGP3, so the bias is large in small samples. When $\sigma_{\circ}$ is unknown, the performance of the Tukey estimator worsens and convergence is very slow for DGP3. Extra simulations showed biases of $0.458$ and $0.386$ for $n = 1600, 6400$, respectively. The LTS estimator performs very well, even when $h$ is unknown. 

DGP5 has $t(3)$ `good' errors and extreme 'outliers'. As in DGP3--4 the average, median and Huber estimators are biased. The Tukey estimator performs very well. This happens, because the `outliers' are generated by the maximal `good' errors and tend to be larger than the `outliers' in DGP4. These `outliers' are down-weighted, which results in a good performance of the Tukey estimator. With unknown scale, the MAD overestimates, which results in slower convergence than in the known scale scenario. The LTS estimator performs very well, and even better when $h$ is unknown. This happens because $h$ is estimated using a cumulant based normality test, while the `good' observations are $t(3)$, resulting in a beneficial underestimation.

DGP6 mimics the situation of infinitesimal contamination. We see that all estimators are consistent as the bias decreases to zero as sample size increases. This happens, because the influence of the `outlier' diminishes as the samples size increases. We do, however, see that the average is quite biased for smaller sample sizes. In particular, if $\xi \rightarrow \infty$, then the bias of the average would diverge, while the bias of the other estimators would be unaffected.

\begin{figure}[tb]
    \centering
    \includegraphics[scale = 0.75]{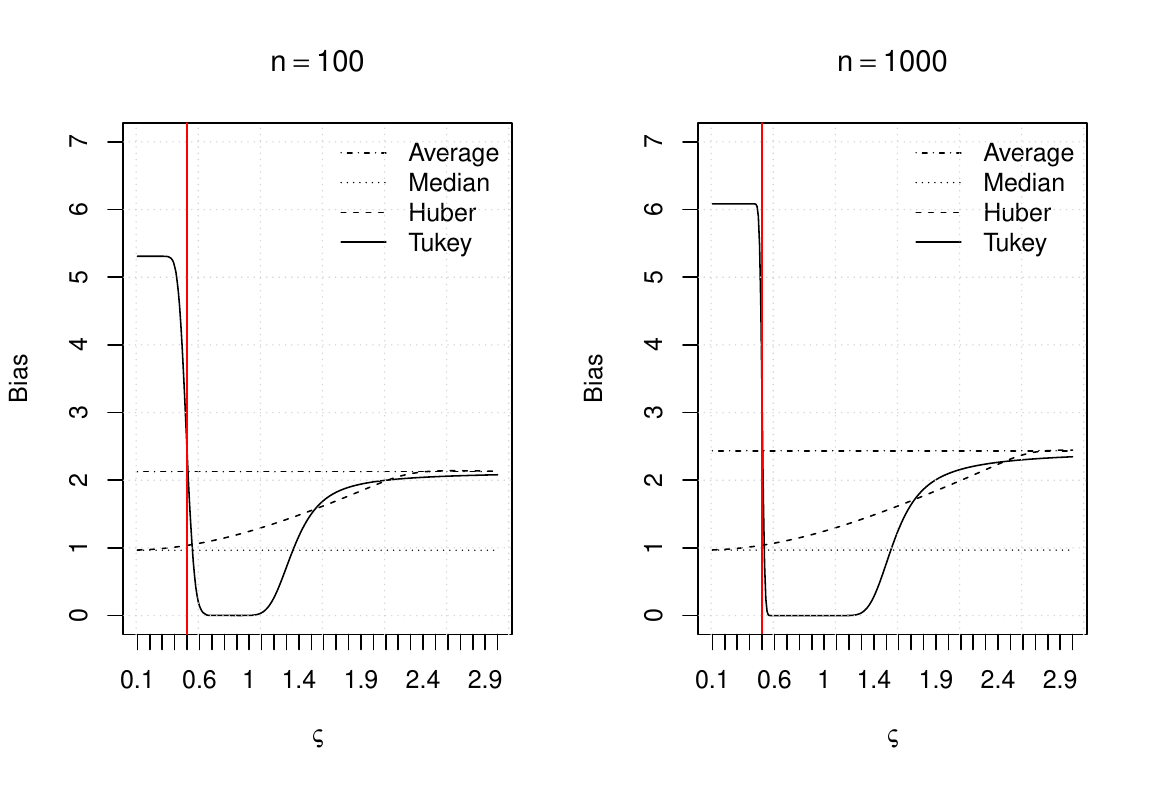}
    \caption{Bias of different {M}-estimators in relation to the consistency factor $\varsigma$. The red vertical line indicates the smallest 
    $\varsigma$ for which boundedness holds for Tukey's estimator according to 
    Theorem \ref{theorem-bounded}$(ii)$.}
    \label{fig:boundedness-n100}
\end{figure}

\subsection{Boundedness}
\label{sec:sim:boundedness}
We explore how the bias of 
four 
{M}-estimators depends on the scale estimation. 
We use $\hat\sigma=\varsigma\sigma_\circ$ and vary the consistency factor $\varsigma$. Figure \ref{fig:boundedness-n100} has two graphs showing the bias of the estimator for varying
$\varsigma$ and
$n=100$, $1000$.

We generate data from
DGP4, albeit with $h/n = 0.6$. The Huber and Tukey estimators 
have the same tuning parameters, tuned for $\sigma_{\circ} = 1$, as before.

The average and median are biased, but constant in $\varsigma$ due to their scale invariance. For large $n$, the median approaches the $5/6$ normal quantile, which is 0.97. The average diverges as $0.4 (\max_{i\in\zeta_n}\varepsilon_i+3)$. We have $\max_{i\in\zeta_n}\varepsilon_i/\{2\log(0.6n)\}^{1/2}\overset{p}{\to} 1$, see \cite{berenguer2023leverage}, Appendix B.

Consider the Tukey estimator. When $\varsigma = 1$, the estimator has zero bias. When $\varsigma > 1$, the bias is zero up until $\varsigma = 1.2\ (n = 100)$ or $\varsigma = 1.3\ (n = 1000)$ and converges towards the bias of the average. When $\varsigma$ is large, the `outliers' are not down-weighted so that the Tukey estimator behaves as the average. Nevertheless, it is consistent since the `outliers' diverge and therefore for any fixed $\varsigma > 1$ the `outliers' are, at some point, down-weighted. When $\varsigma < 1$, the bias is zero down to $\varsigma = 0.65\ (n=100)$ or $\varsigma = 0.55\ (n = 1000)$. Below those values, the Tukey estimator hones in on the outliers so that its bias diverges as $\max_{i\in\zeta_n}\varepsilon_i+3$.
This and the dependency on $\xi$ shows unboundedness of the estimator. The red vertical line in Figure \ref{fig:boundedness-n100} indicates the smallest value of $\varsigma$ for which boundedness holds according to Theorem \ref{theorem-bounded}$(ii)$. The simulations indicate that the first part of the bias curve converges to a function with a step at the red line as expected.

Consider the Huber estimator. When $\varsigma \rightarrow 0$, it behaves as the median as the width of the central quadratic part of the objective function shrinks, so that the linear parts dominate. When $\varsigma \rightarrow \infty$, it behaves as the average as the quadratic part of the objective function now dominates.

\section{Discussion}
Robustness is the property that an estimator changes in a bounded way when adding contaminated observations to a sample. This is traditionally measured in terms of the finite sample breakdown point. {M}-estimators are known to have a high finite sample breakdown point for location-scale models \citep{donohohuber1983,huber1984finite}. The  
results on asymptotic boundedness confirm this for location estimators (Theorem \ref{theorem-bounded}) and for scale estimators (Theorem \ref{theorem-iqr}). These results do not generalize to regression M-estimators \citep{he1990tail}.

Robust estimators that are consistent and have simple inferential properties under contamination are particularly desirable. Theorems \ref{theorem-consistency}, \ref{theorem-non-consistency} showed that redescending {M}-estimators are consistent under contamination, while non-redescending {M}-estimators are not. Even so, simple inference based on redescending M-estimators requires a 
scale estimator that is consistent under contamination. This fails for 
robust scale estimators such as the IQR and 
MAD
(Theorem \ref{prop-inconsistent-scale}). It remains an open problem to find robust 
{M}-estimators with simple inference when a proportion of the observations are contaminated.

By simulation, we found that the LTS estimator performs better than various {M}-estimators, making it an attractive alternative. LTS estimators have a high breakdown point in location-scale models and in regression \citep[pages 132 -- 135]{RousseeuwLeroy1987}. 
LTS estimators for location and scale, and more generally for regression coefficients, have the oracle property that inference is the same as for the infeasible least squares estimator on the `good' observations only \citep{berenguer2023model,berenguer2023leverage}. Inference is, therefore, nuisance parameter free and it achieves full efficiency in the contaminated model (Section \ref{sec:redescending}). These results do, however, require that the proportion of `good' observations is known. One could say that the problem of estimating the scale has been replaced by the problem of estimating the proportion of the `good' observations. This appears to be an easier problem to solve theoretically. Indeed, an estimator proposed in \cite{berenguer2023model} appeared successful in the simulations. 

\appendix
\section{Proofs}
We consider estimators for $\mu$ that are location-scale invariant and estimators for $\sigma$ that location invariant and scale equivariant. Thus, throughout we set $\mu_\circ = 0$ and $\sigma_\circ = 1$, so that $y_i = \varepsilon_i$ and $\hat\varsigma = \hat\sigma$.
\begin{proof}[Proof of Theorem \ref{theorem-bounded}]
    The proof is inspired by \cite{berenguer2023leverage,johansen2019boundedness}. 
    Let $e^\mu_i=(y_i-\mu)/\hat\sigma=(\varepsilon_i-\mu)/\hat\varsigma$ so that $R_n(\mu)=\sum_{i=1}^n\rho(e^\mu_i)$.
    We show that $\{R_n(\mu)-R_n(0)\}/h\ge L>0$  for large $\mu$ on a large probability set. Thus, no minimizer is large.

    Consider large $|\mu|>\hat\varsigma B_0$ for a $B_0>0$ to be chosen. Splitting the sum in $R_n(\mu)$ over $i$ in sums over $\zeta_n$ and $\zeta_n^c$ while truncating by an $A_0>1$ to be chosen gives  
    $$  R_n(\mu) \ge \underset{{j\in\zeta_n^c}}{\textstyle\sum} \rho(e^\mu_j) + \underset{{i\in\zeta_n}}{\textstyle\sum}\rho(e^\mu_i)1_{(|\varepsilon_i|\le  A_0)}.    $$ 
    By the reverse triangle inequality, $|e^\mu_i|\ge (|\mu|-|\varepsilon_i|)/\hat\varsigma$. We get  $|e^\mu_i|>B_0- A_0/\hat\varsigma$ when $|\varepsilon_i|\le A_0$ as $|\mu|>\hat\varsigma B_0$. Further, $|e^\mu_i|>x_*$ if $B_0 \ge x_*+ A_0 /\hat\varsigma$. By Assumption \ref{assumption-rho}, we get $\rho (e^\mu_i) = \rho_* + \psi_*(|e^\mu_i| - x_*)$. Thus, 
    $$  R_n(\mu) \ge \underset{{j\in\zeta_n^c}}{\textstyle\sum}\rho(e^\mu_j)
        + \underset{{i\in\zeta_n}}{\textstyle\sum}\{\rho_* + \psi_*(|e^\mu_i| - x_*)\}1_{(|\varepsilon_i|\le  A_0)}.    $$ 
    As $R_n(0)=\sum_{j\in\zeta_n^c}\rho(\varepsilon_j/\hat\varsigma)+\sum_{i\in\zeta_n}\rho(\varepsilon_i/\hat\varsigma)$, we get
    \begin{multline}
        R_n(\mu)-R_n(0)\ge \underset{{j\in\zeta_n^c}}{\textstyle\sum}\{\rho(e^\mu_j)-\rho(\varepsilon_j/\hat\varsigma)\}
    \\  + \underset{{i\in\zeta_n}}{\textstyle\sum}\big[\{\rho_* + \psi_*(|e^\mu_i| - x_*)\}1_{(|\varepsilon_i|\le  A_0)}-\rho(\varepsilon_i/\hat\varsigma)\big].\label{for-lemma-boundedness-sigmahat}
    \end{multline}    
   
    We bound the second term in (\ref{for-lemma-boundedness-sigmahat}). Argue as above to show $|e^\mu_i|\ge (|\mu|-A_0)/\hat\varsigma$. Further, we bound $1_{(|\varepsilon_i| \le A_0)}\ge 1 - \varepsilon_i^2/A_0^2$ such that $h^{-1}\sum_{i\in\zeta_n}1_{(|\varepsilon_i|\le A_0)}\ge 1- C_n/ A_0^2$, with $C_n = h^{-1}\sum_{i \in \zeta_n} \varepsilon_i^2$. Let $\hat{\rho}_{\hat\varsigma} = h^{-1} \sum_{i \in \zeta_n} \rho(\varepsilon_i/\hat\varsigma)$. The second term is then at least $h[\{\rho_* + \psi_*(|\mu|/\hat\varsigma - A_0/\hat\varsigma - x_*)\}(1-C_n/A_0^2) - \hat{\rho}_{\hat\varsigma}]$.
        
    We bound the summands $r^\mu_j =\rho ( e^\mu_j )  - \rho(\varepsilon_j/\hat\varsigma)$ in the first term of (\ref{for-lemma-boundedness-sigmahat}).  We show that $r^\mu_j > -\rho_* - \psi_* |\mu|/\hat\varsigma$. Since $\rho$ is symmetric, it suffices to consider $\mu > 0$ and $\varepsilon_j \in \mathbb{R}$. We check two main cases, each with three sub-cases.
        
    Case 1. Let $\mu > 0$ and $ \varepsilon_j \ge 0$. Then $ \varepsilon_j > \varepsilon_j - \mu$. 
    \\(a) If $ \varepsilon_j/\hat\varsigma >  (\varepsilon_j - \mu)/\hat\varsigma \ge x_*$, then $\rho$  is linear in both arguments, so that $r^\mu_j =\psi_*(\varepsilon_j - \mu)/\hat\varsigma - \psi_*\varepsilon_j/\hat\varsigma =-\psi_* \mu/\hat\varsigma$.
    \\(b) If $\varepsilon_j/\hat\varsigma \ge x_* > (\varepsilon_j - \mu)/\hat\varsigma $, then the first $\rho$ is at least 0 and the second is linear and equal to $\rho_* + \psi_*(\varepsilon_j/\hat\varsigma - x_*)$. Exploit that $x_* > (\varepsilon_j - \mu)/\hat\varsigma$ to get $r^\mu_j >  -\rho_* - \psi_*\mu/\hat\varsigma.$ 
    \\(c) If $x_* \ge \varepsilon_j/\hat\varsigma \ge 0$, then the first $\rho$ is at least 0 and the second is at most $\rho^*$, so that $r^\mu_j \ge 0 - \rho_* = -\rho_*$.

    Case 2. Let $\mu > 0$ and $\varepsilon_j < 0$. Then $0 > \varepsilon_j >  \varepsilon_j - \mu$. 
    \\(a) If $\varepsilon_j/\hat\varsigma >  (\varepsilon_j - \mu)/\hat\varsigma \ge -x_*$, then the first $\rho$ is at least 0 and the second is at most $\rho^*$, so that $r^\mu_j \ge 0 - \rho_* = -\rho_*$.
    \\(b) If $ \varepsilon_j/\hat\varsigma \ge -x_* >  (\varepsilon_j - \mu)/\hat\varsigma$, the first $\rho$ is linear and the second is at most $\rho^*$. Then $r^\mu_j\ge \psi_*\{(\mu - \varepsilon_j)/\hat\varsigma - x_*\}  \ge 0$.
    \\(c) If $-x_* >  \varepsilon_j/\hat\varsigma > (\varepsilon_j - \mu)/\hat\varsigma$, then $\rho$ is linear in both arguments, so that $r^\mu_j=  \psi_* \mu/\hat\varsigma \ge 0$.   
    
    Now, insert the two bounds in (\ref{for-lemma-boundedness-sigmahat}). This gives a lower bound $\{R_n(\mu) - R_n(0)\}/h > L_n(\mu)$, where
    \begin{multline*}
        L_n(\mu) = - (\rho_* + \psi_* |\mu|/\hat\varsigma)(n-h)/h 
    \\  +\{\rho_* + \psi_*(|\mu|/\hat\varsigma - A_0/\hat\varsigma-x_*)\}(1-C_n/A_0^2) - \hat{\rho}_{\hat\varsigma}.
    \end{multline*}
    Rearrange this lower bound as
    $$
        L_n(\mu)=(2 - n/h - C_n/A_0^2)(\rho_* + \psi_*|\mu|/\hat\varsigma) 
        - \hat{\rho}_{\hat\varsigma} - (1 - C_n/A_0^2)\psi_*(x_* + A_0/\hat\varsigma) .
    $$
    
    By Assumption \ref{assumption-boundedness}, then $\forall\epsilon > 0$, $\exists C, n_0 > 0$, $\forall n > n_0$ we can find a set $\mathcal{S}_n$ with $\mathsf{P}(\mathcal{S}_n) > 1 - \epsilon$ so that on $\mathcal{S}_n$ we have $n/h<1/\lambda+\epsilon$, $C_n < C$ and $|\hat\varsigma-\varsigma|<\epsilon$ and in turn $|\hat{\rho}_{\hat\varsigma}-\tilde{\rho}_\varsigma|<\epsilon$. We choose $A_0$ so large that $C/A_0^2<\epsilon$.
    
    Consider the case $\psi_*=0$. Then $L_n(\mu)$ simplifies as
    \begin{align*}
        L_n(\mu) &=(2 - n/h - C_n/A_0^2) \rho_* - \hat{\rho}_{\hat\varsigma} \\
        &> (2-1/\lambda-2\epsilon) \rho_* -\tilde{\rho}_\varsigma-\epsilon
        = L.
    \end{align*}
    Since $\lambda>1/(2-\tilde{\rho}_\varsigma/\rho_*)$, 
    then $L>0$ for small $\epsilon$. 
    
    Consider the case $\psi_*>0$. Since $\lambda>1/2$, then $-1/\lambda>-2+3\epsilon$, for small $\epsilon$. Thus, on $\mathcal{S}_n$, we get $(2-n/h-C_n/A_0^2)>\epsilon>0$. We can then bound $L_n(\mu)$ from below using $|\mu| > \hat\varsigma B_0$ and get, on $\mathcal{S}_n$, that $L_n(\mu)>L$ where
    $$  L=\epsilon(\rho_* + \psi_*B_0) - [  \tilde{\rho}_{\varsigma} + \epsilon + \psi_*\{x_* + A_0/(\varsigma-\epsilon)\}]. $$
    Given $A_0$, we can now choose $B_0$ so large that $L>0$.

    For measurability, apply the argument of \cite{jennrich1969asymptotic}, see also \cite{johansen2019boundedness} and \cite{clarke2018robustness}, Chapter 4.
\end{proof}


\begin{proof}[Proof of Theorem \ref{theorem-consistency}]    
    The ob\-jec\-ti\-ve function (\ref{for:loss-function-sigma-hat}) is then $R_n(\mu) = \sum_{i=1}^n\rho\{ (\varepsilon_i-\mu)/\hat\varsigma\}.$
    Due to Theorem \ref{theorem-bounded} using Assumptions \ref{assumption-rho}, \ref{assumption-boundedness}, it suffices to consider $\mu$ in a compact set.

    Let $R_n^\alpha(\mu)=\sum_{i\in\zeta_n}\rho\{(\varepsilon_i-\mu)/\alpha\}$ for $\alpha=\varsigma,\hat\varsigma$. We show $R_n(\mu) = R_n^{\hat\varsigma}(\mu) + (n-h)\rho_*$ with large probability, uniformly in $\mu$. By the reverse triangle inequality $|\varepsilon_j-\hat\mu|/\hat\varsigma \ge (| \varepsilon_j|-|\hat\mu|)/\hat\varsigma$. Note, $\hat\varsigma\overset{p}{\to}\varsigma$ and $\min_{j\in\zeta_n^c} \varepsilon_j^2 \overset{p}{\to} \infty$, due to Assumptions \ref{assumption-boundedness}$(i)$, \ref{assumption-consistency}$(i)$. As $\mu$ is bounded, then $|\varepsilon_j-\hat\mu|/\hat\varsigma \ge x_*$ for large $n$. Thus, $\rho \{(\varepsilon_j-\mu)/\hat\varsigma\}=\rho_*$ and $R_n(\mu)$ has the desired form.
        
    We show $R_n^{\hat\varsigma}(\mu)$ and 
    $R_n^\varsigma(\mu)$ are close. Let $r_i(\mu)=\rho\{(\varepsilon_i-\mu)/\hat\varsigma\}-\rho\{(\varepsilon_i-\mu)/\varsigma\}$.  Since $\rho$ is Lipschitz by Assumption \ref{assumption-consistency}$(ii)$, then $|r_i(\mu)|\le C|\varepsilon_i-\mu||\hat\varsigma^{-1}-\varsigma^{-1}|$. Here, $|\varepsilon_i-\mu|\le |\varepsilon_i|+|\mu|$. 
    Since $\hat\varsigma\overset{p}{\to}\varsigma$ and $\mu$ is bounded, it suffices to argue that $h^{-1}\sum_{i\in\zeta_n} (|\varepsilon_i|+1)$ is bounded in probability. This follows from Assumption \ref{assumption-boundedness}$(ii)$.

    The $R_n^\varsigma(\mu)$ minimizers are near $\mu_\circ = 0$ by Assumption \ref{assumption-consistency}$(iii)$. Thus, $R_n^\varsigma(\mu)$ is bounded away from its minimum outside the vicinity of $\mu_\circ$. As $R_n^{\hat\varsigma}(\mu)$ 
    is close to $R_n^{\varsigma}(\mu)$, the same applies to $R_n^{\hat\varsigma}(\mu)$ and hence to $R_n(\mu)$.

    For measurability, apply the argument of \cite{jennrich1969asymptotic}.
\end{proof}


\begin{proof}[Proof of Theorem \ref{theorem-distribution}]
    Sums are taken over $i\in\zeta_n$.
    Let $u_i={\varepsilon_i}/{\varsigma}$, $v_i={\varepsilon_i-\hat\mu}/{\hat\varsigma}$, $w_i={(\varepsilon_i-\hat\mu_{\zeta_n})}/{\varsigma}$. 
    
    For $n$ large, the minimizers $\hat\mu$, $\hat\mu_{\zeta_n}$ solve  $0={\textstyle\sum}\dot\rho(v_i)$ and 
    $0={\textstyle\sum}\dot\rho(w_i)$.     
    The Mean Value Theorem gives
    \begin{align*}                  
        0&={\textstyle\sum}\dot\rho(u_i)+{\textstyle\sum}\ddot\rho(v_i^*)(v_i-u_i), \\
        0&={\textstyle\sum}\dot\rho(u_i)+{\textstyle\sum}\ddot\rho(w_i^*)(w_i-u_i).    
    \end{align*}
    for intermediate points $v_i^*$ and $w_i^*$. Subtract the equations while writing $v_i-u_i=(v_i-w_i)+(w_i-u_i)$ to get
    $$  0={\textstyle\sum}\ddot\rho(v_i^*)(v_i-w_i) +{\textstyle\sum}\big\{\ddot\rho(v_i^*)-\ddot\rho(w_i^*)\big\}(w_i-u_i).$$
    Replace differences $w_i-u_i=-{\hat\mu_{\zeta_n}}/{\varsigma}$ and
    \begin{align*}  
        v_i-u_i&=u_i({\varsigma}/{\hat\varsigma}-1)
         - {\hat\mu_{\zeta_n}}/{\hat\varsigma} -{(\hat\mu-\hat\mu_{\zeta_n})}/{\hat\varsigma}, 
    \end{align*}
    add \& subtract $\ddot\rho(u_i)$ to each $\ddot\rho$ and isolate $\hat\mu-\hat\mu_{\zeta_n}$ to get
    \begin{align*}
        &\quad (\hat\mu-\hat\mu_{\zeta_n}){\textstyle\sum}\big[\ddot\rho(u_i)+\big\{\ddot\rho(v_i^*)-\ddot\rho(u_i)\big\}\big] \\
        &= ({\varsigma}-{\hat\varsigma}) {\textstyle\sum}\big[\ddot\rho(u_i)+\big\{\ddot\rho(v_i^*)-\ddot\rho(u_i)\big\}\big] (u_i-{\hat\mu_{\zeta_n}}/{\varsigma}) \\
        &\quad - {\hat\mu_{\zeta_n}}(\hat\varsigma/{\varsigma}){\textstyle\sum}\big\{\ddot\rho(v_i^*) - \ddot\rho(w_i^*) \big\} .
    \end{align*}
    Using the triangle inequality, we can bound 
    \begin{align}
        &\quad {|\hat\mu-\hat\mu_{\zeta_n}|} \big\{ \big|{\textstyle\sum}\ddot\rho(u_i)\big| \label{asymptotic_inequality} 
         -{\textstyle\sum}\big|\ddot\rho(v_i^*)-\ddot\rho(u_i)\big|\big\} \\
        &\le |{\varsigma}-{\hat\varsigma}| \big|{\textstyle\sum}\ddot\rho(u_i)(u_i-{\hat\mu_{\zeta_n}}/{\varsigma})\big|\notag\\ 
        &\quad + |{\varsigma}-{\hat\varsigma}| {\textstyle\sum}\big|\ddot\rho(v_i^*)-\ddot\rho(u_i)\big| (|u_i|+{|\hat\mu_{\zeta_n}|}/{\varsigma}) \notag\\
        &\quad + {|\hat\mu_{\zeta_n}|}(\hat\varsigma/{\varsigma}){\textstyle\sum}\{\big|\ddot\rho(v_i^*) - \ddot\rho(w_i^*) \big| . \notag
    \end{align}
   
    By the Lipschitz property in Assumption \ref{assumption-distribution}$(ii)$ we get
    $$  \big|\ddot\rho(v_i^*) - \ddot\rho(u_i) \big| \le C |v_i^*-u_i| \le C |v_i-u_i| .    $$
    By the above expansion of $v_i-u_i$, we find, $\forall\epsilon>0$, $\exists$ a set $S_n$ with $P(S_n)>1-\epsilon$, on which
    $$
        {\textstyle\sum} \big|\ddot\rho(v_i^*) - \ddot\rho(u_i) \big| 
         \le  C {\textstyle\sum} \big( |u_i||{\varsigma}-{\hat\varsigma}| + |\hat\mu|\big)/\hat\varsigma \le \epsilon n,         
    $$
    as ${\textstyle\sum} |u_i| = O_p(h)$ by Assumption \ref{assumption-boundedness}$(ii)$, as $\hat\varsigma\overset{p}{\to}\varsigma$ by Assumption \ref{assumption-distribution}$(i)$, and given $\epsilon$ we choose $S_n$ and find a $C>0$ so that $|\hat\mu|<C n^{-1/2}$ on $S_n$ by Theorem \ref{theorem-consistency} using Assumptions \ref{assumption-rho}, \ref{assumption-boundedness}, \ref{assumption-consistency}. Similarly,
    $$  {\textstyle\sum} \big|\ddot\rho(v_i^*) - \ddot\rho(u_i) \big| |u_i| \le \epsilon n,\,  {\textstyle\sum} \big|\ddot\rho(w_i^*) - \ddot\rho(u_i) \big| \le \epsilon n. $$
    Finally, Assumption \ref{assumption-distribution}$(ii)$ implies
    $$  h^{-1} {\textstyle\sum}\ddot\rho(u_i) \overset{p}{\to} \mathsf{E}\ddot\rho(u_1)>0, \quad {\textstyle\sum} u_i\ddot\rho(u_i) = O_p(h^{1/2}).
    $$

    Return to (\ref{asymptotic_inequality}). Divide by $h$, insert the above bounds and use $\hat\varsigma-\varsigma=O_p(n^{-1/2})$ and $|\hat\mu_{\zeta_n}|\le C n^{-1/2}$ by Assumptions \ref{assumption-distribution}$(i,iv)$. Then, $\forall\epsilon>0$, $\exists$a large probability set $S_n$, on which
    $|\hat\mu-\hat\mu_{\zeta_n}| \mathsf{E}\ddot\rho(u_1) (1-\epsilon) \le \epsilon n^{-1/2} . $
    Thus, $\forall\epsilon>0$, $\exists S_n$, on which $|\hat\mu-\hat\mu_{\zeta_n}|\le \epsilon n^{-1/2}$. Use that $\epsilon$ is arbitrary.
    
    A measurable version of $\hat\mu$ is found as in \cite{jennrich1969asymptotic}. This has the assumed asymptotic distribution of $\hat\mu_{\zeta_n}$.
\end{proof}


\begin{proof}[Proof of Theorem \ref{theorem-non-consistency}]
    We have that $\hat\sigma = 1$ by Assumption \ref{assumption-non-consistency}$(i)$, so that
    $(y_i-\mu)/\hat\sigma=\varepsilon_i-\mu$.

    Since $\hat\mu$ is bounded in probability by Theorem \ref{theorem-bounded} using Assumptions \ref{assumption-rho}, \ref{assumption-boundedness}, it suffices to consider $|\mu|\le B$.
    Note that Assumption \ref{assumption-non-consistency}$(i)-(iii)$ implies Assumption \ref{assumption-boundedness}.

    `Outliers' satisfy $\varepsilon_j = \varepsilon_{(h)} + \xi$ with $\xi > 0$ for $j\not\in\zeta_n$, while $\varepsilon_{(h)}=\max_{i\in\zeta_n}\varepsilon_i \to\infty$ in probability by Assumption \ref{assumption-non-consistency}$(ii,v)$. As $|\varepsilon_j-\mu| \ge | \varepsilon_j|-|\mu|$ by the reverse triangle inequality, $\varepsilon_j\to\infty$ and $|\mu|\le B$, then $|\varepsilon_j-\mu|\to\infty$.
    Thus, $\rho (\varepsilon_j-\mu) = \rho_* + \psi_*\{\varepsilon_{(h)} + \xi - \mu - x_*\}$
    due to Assumption \ref{assumption-rho} to $\rho$.  
    We get that on a large probability set, then
    $R_n(\mu)= \tilde{R}_n(\mu) + \tilde{C}_n$, where 
    \begin{align*}
        \tilde{R}_n(\mu)
        &= {\textstyle\sum}_{i\in\zeta_n} \left\{\rho(\varepsilon_i-\mu)\right\} - (n-h) \psi_*\mu, \\
        \tilde{C}_n 
        &= (n-h) \psi_*\{\varepsilon_{(h)} + \xi\} + (n-h)(\rho_* - \psi_*x_*).
    \end{align*}
    Note, $\tilde{C}_n$ is constant in $\mu$. Thus, we only analyze $\tilde{R}_n(\mu)$.

    Since $h^{-1} \sum_{i\in\zeta_n}\rho( \varepsilon_i-\mu) \to \mathsf{E}\rho( \varepsilon_1 - \mu)$ in probability uniformly 
    for $\mu$ near 0
    by Assumption \ref{assumption-non-consistency}$(iii)$, we get $$h^{-1} \tilde{R}_n(\mu) \to \mathsf{E}\{\rho(\varepsilon_1 - \mu)\} - (1/\lambda-1)\psi_*\mu.$$
    In this limit, the
    first term has a zero derivative in $\mu$ at $0$ 
    by Assumption \ref{assumption-non-consistency}$(iv)$. The second, subtracted, term 
    is positive since, by assumption, $\psi_* > 0$, $\lambda < 1$. Thus, the derivative at $0$ is non-zero so $\hat\mu$ is inconsistent. 
\end{proof}

\begin{proof}[Proof of Theorem \ref{theorem-iqr}]   
    Let $\hat{v}_{p}$, $\hat{v}_p^\circ$ denote the $p$-quantiles of $\varepsilon_i$ for $i=1,\dots ,n$ and for $i\in\zeta_n$, respectively. Similarly, let $\hat{v}_{p,abs}$, $\hat{v}_{p,abs}^\circ$ be $p$-quantiles of $|\varepsilon_i|$.

    $(a)$ 
    We get worst bounds for
    $\hat{v}_p$ 
    by placing all `outliers' on one side of the `good' observations:
    $$
         \hat{v}^\circ_{ 1-3/(4\lambda) } \le \hat{v}_{1/4} \le \hat{v}^\circ_{ 1/(4\lambda) }, \quad \hat{v}^\circ_{ 1-1/(4\lambda) } \le \hat{v}_{3/4} \le \hat{v}^\circ_{ 3/(4\lambda) } .
    $$
    The `good' quantiles are bounded in probability by Assumption \ref{assumption-iqr-bounded} since $0<1-3/(4\lambda)\le 1/(4\lambda)<1$ and $0<1-1/(4\lambda)\le 3/(4\lambda)<1$ when $\lambda>3/4$ . 
    Hence, $\hat{\sigma}_{\textsc{IQR}}$ is bounded in probability. 

    $(b)$ 
    Combine with the triangle inequality to get
    $$
        |y_i-\hat{q}_{1/2}| =  |\varepsilon_i-\hat{v}_{1/2}| \le  |\varepsilon_i| + |\hat{v}_{1/2}|.
    $$
    Taking the median, we get that $\hat{\sigma}_{\textsc{MAD}}=\mathrm{med}\, |y_i-\hat{q}_{1/2}|$ is bounded by $\hat{v}_{1/2,abs} + |\hat{v}_{1/2}|$. Argue as in $(a)$ that $\hat{v}^\circ_{ 1-1/(2\lambda) } \le \hat{v}_{1/2} \le \hat{v}^\circ_{ 1/(2\lambda) }$ and $ \hat{v}^\circ_{ 1-1/(2\lambda),abs } \le \hat{v}_{1/2,abs} \le \hat{v}^\circ_{ 1/(2\lambda),abs } .
    $
    The `good' quantiles are bounded in probability when $\lambda>1/2$. Hence $\hat{\sigma}_{\textsc{MAD}}$ is bounded. 
\end{proof}

\begin{proof}[Proof of Theorem \ref{prop-inconsistent-scale}]
    (a) By Assumption \ref{assumption-igr-inconsistency}, $\hat{q}_{p} \overset{p}{\rightarrow} q^{\mathsf{F}}_{(p - \varrho)/\lambda}$ for $p=1/4$, $3/4$.
    Thus, $\hat\sigma_{IQR}\overset{p}{\rightarrow} \varsigma_{IQR}$. 

    (b). Again, $\hat{q}_{1/2} \overset{p}{\rightarrow} q^{\mathsf{F}}_{(1/2 - \varrho)/\lambda}$.
    The `good' errors 
    are continuously distributed
    by Assumption \ref{assumption-igr-inconsistency}$(i)$. 
    Thus, we analyze $\mathrm{med}\, (|\varepsilon_i - c |)$, where $\mathsf{F}(c)=(1/2-\varrho)/\lambda$.
    
    We 
    find a $d$ such that $\# \left(i \colon |\varepsilon_i - c| \leq d \right) = n/2$.
    The `outlier' errors 
    drift to infinity by Assumption \ref{assumption-igr-inconsistency}$(ii)$, so as $n \rightarrow \infty$ we have that, for any finite valued $d$,
    \begin{align*}
        \mathsf{P}\left\{\left(i \colon |\varepsilon_i - c| \le d \right) = \left(i \in \zeta_n \colon  |\varepsilon_i - c| \le d   \right) \right\} \rightarrow 1.
    \end{align*}
    For any finite valued $d$, as $n \rightarrow \infty$,
    $$
        h^{-1} \#\left(i \in \zeta_n \colon  |\varepsilon_i - c| \le d   \right) 
        \overset{p}{\to} \mathsf{F}(c+d) - \mathsf{F}(c-d) .
    $$
    Solutions $d$ to
    $\# (i \in \zeta_n \colon  |\varepsilon_i - c| \le d   )=h/2$
    converge to solutions of $\mathsf{F}(c+d) - \mathsf{F}(c-d)=1/2$ as $\mathsf{F}$ is continuous.
    Then, solutions to $\# (i: | \varepsilon_i - \hat{q}_{1/2} |\le d) = n/2$ converge
    to solutions of $\lambda\{\mathsf{F}(c+d)-\mathsf{F}(c-d)\}=1/2$. 
\end{proof}

\begin{remark}\label{rem:consistency_IQR}
    We argue that $\varsigma_{\textsc{IQR}}>1$ for normal `good' errors and $3/4<\lambda<1$. Write $\varsigma_{\textsc{IQR}}=s_{\lambda,\varrho}/s_{1,0}$, where $s_{\lambda, \varrho}=q^\Phi_{(3/4-\varrho)/\lambda}-q^\Phi_{(1/4-\varrho)/\lambda}$. As $s_{\lambda,\varrho}=s_{\lambda,1-\lambda-\varrho}$, it suffices to analyze $\varrho \le (1-\lambda)/2$.

    If $\varrho=(1-\lambda)/2$, then $s_{\lambda,\varrho}=q^\Phi_{1/2+1/(4\lambda)}-q^\Phi_{1/2-1/(4\lambda)}$, which decreases to $s_{1,0}$ as $\lambda\uparrow 1$. Thus, $\varsigma_{\textsc{IQR}}>1$.

    If $\varrho=(1-\lambda)/2-\alpha\lambda$ for $0<\alpha<(1-\lambda)/(2\lambda)$ then $s_{\lambda,\varrho}=q^\Phi_{1/2+1/(4\lambda)+\alpha}-q^\Phi_{1/2-1/(4\lambda)+\alpha}$. This increases in $\alpha$, since the argument of the first term term is furthest from $1/2$ for any $\alpha$. Thus, $\varsigma_{\textsc{IQR}}>1$ by the previous result.
\end{remark}
            
\begin{remark}
    \label{rem:consistency_MAD}
    We argue that $\varsigma_{\textsc{MAD}}>1$ for normal `good' errors, $1/2<\lambda<1$ and any $\varrho$.
    Given $c$, we choose $d$ so that $A_{c,d}=\Phi(c+d)-\Phi(c-d)=1/(2\lambda)$. We show $d>q^\Phi_{3/4}$. 
    Now, $A_{c,d}\le\Phi(d)-\Phi(-d)$, which  increases in $d$. Thus, $d$ is least for $c=0$. As $\lambda<1$ then $1/(2\lambda)>1/2$, and $d>q^\Phi_{3/4}$, which solves $\Phi(d)-\Phi(d)=1/2$.
%
\end{remark}


\setlength{\bibsep}{0.0pt}
\bibliographystyle{apalike}
\bibliography{bent_journals_long,bibliography}

\end{document}